\newcommand{\keywords}[1]{\textbf{\textit{Keywords:}} #1}
\newtheorem{lem}{Lemma}
\newtheorem{theorem}{Theorem}
\newtheorem{prop}{Proposition}
\theoremstyle{definition}
\newtheorem{defin}{Definition}
\newtheorem*{algorithm}{Algorithm}
\theoremstyle{remark}
\newtheorem{rem}{Remark}
\newtheorem*{example}{Example}
\begin{document}

\title{Partial Classification of Lorenz Knots: Syllable Permutations of Torus Knots Words}

\author{Paulo Gomes\thanks{\'Area Departamental de Matem\'atica, Instituto Superior de Engenharia de Lisboa, e-mail: pgomes@adm.isel.pt}, Nuno Franco\thanks{CIMA-UE and Departamento de Matem\'atica, Universidade de \'Evora, e-mail: nmf@uevora.pt} and Lu\'is Silva\thanks{CIMA-UE and \'Area Departamental de Matem\'atica, Instituto Superior de Engenharia de Lisboa, e-mail: lfs@adm.isel.pt}}

\maketitle

\begin{abstract}
  We define families of aperiodic words associated to Lorenz knots that arise naturally as syllable permutations of symbolic words corresponding to torus knots. An algorithm to construct symbolic words of satellite Lorenz knots is defined. We prove, subject to the validity of a previous conjecture, that Lorenz knots coded by some of these families of words are hyperbolic, by showing that they are neither satellites nor torus knots and making use of Thurston's theorem. Infinite families of hyperbolic Lorenz knots are generated in this way, to our knowledge, for the first time. The techniques used can be generalized to study other families of Lorenz knots.
\end{abstract}

\keywords{Lorenz Knots, Hyperbolic knots, Symbolic Dynamics}

\section{Introduction}
\label{sec:intro}

\subsection*{Lorenz knots}
\label{sec:lorknots}

\par

\emph{Lorenz knots} are the closed (periodic) orbits in the Lorenz system \cite{Lorenz63}

\begin{align}
  \label{eq:lorsys}
  x' &= -10 x +10 y \nonumber \\
  y' &= 28 x -y -xz \\
  z' &= -\frac{8}{3} z +xy \nonumber 
\end{align}
while \emph{Lorenz links} are finite collections of (possibly linked) Lorenz knots.

The systematic study of Lorenz knots and links was made possible by the introduction of the \emph{Lorenz template} or knot-holder, which is a branched 2-manifold introduced by Williams \cite{Williams77},\cite{Williams79}.  It is built from one joining chart, one splitting chart and an expanding semi-flow defined on them (Figs. \ref{fig:joinsplit} and \ref{fig:lortemp}). It was first conjectured by Guckenheimer and Williams and later proved through the work of Tucker and Ghys that every knot and link in the Lorenz system can be projected into the Lorenz template. Birman and Williams made use of this result to investigate Lorenz knots and links \cite{Birman83}. For a review on Lorenz knots and links, see also \cite{Birman11}.

A $T(p,q)$ torus knot is (isotopic to) a curve on the surface of an unknotted torus $T^2$ that intersects a meridian $p$ times and a longitude $q$ times. Birman and Williams \cite{Birman83} proved that every torus knot is a Lorenz knot.

A satellite knot is defined as follows: take a nontrivial knot $C$ (companion) and nontrivial knot $P$ (pattern) contained in a solid unknotted torus $T$ and not contained in a $3-ball$ in $T$. A satellite knot is the image of $P$ under an homeomorfism that takes the core of $T$ onto $C$.

A knot is  hyperbolic if its complement in $S^3$ is a hyperbolic $3-manifold$. Thurston \cite{Thurston82} proved that a knot is hyperbolic \emph{iff} it is neither a satellite knot nor a torus knot. One of the goals in the study of Lorenz knots has been their classification into \emph{hyperbolic} and \emph{non-hyperbolic}, possibly further distinguishing torus knots from satellites.  Birman and Kofman \cite{Birman09} listed hyperbolic Lorenz knots taken from a list of the simplest hyperbolic knots. In a previous article we generated and tested for hyperbolicity, using the program \emph{SnapPy}, families of Lorenz knots that are a generalization of some of those that appear in this list, which led us to conjecture that the families tested are hyperbolic \cite{Gomes13}.

The first-return map induced by the semi-flow on the \emph{branch line} (the horizontal line in Fig. \ref{fig:lortemp} where the two branches of the joining chart meet) is called the \emph{Lorenz map}. If the branch line is mapped onto $[-1,1]$, then the Lorenz map $f$ becomes a one-dimensional map from $[-1,1] \setminus \{0\}$ onto $[-1,1]$, with one discontinuity at $0$ and stricly increasing in each of the subintervals $[-1,0[$ and $]0,1]$ (Fig. \ref{fig:lormap}).

\begin{figure}
  \centering
  \includegraphics[scale=.50]{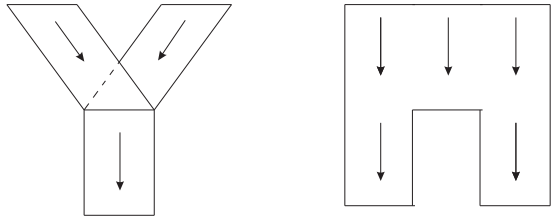}
  \caption{Joining (left) and splliting (right) charts}
  \label{fig:joinsplit}
\end{figure}

\begin{figure}
  \centering
  \includegraphics[scale=.28]{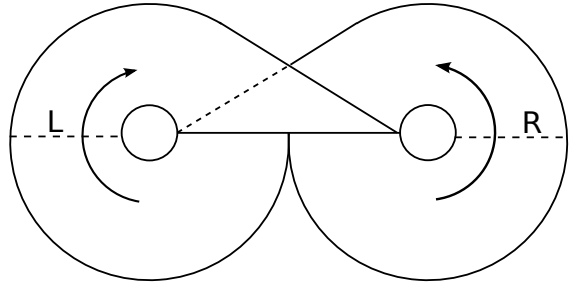}
  \caption{The Lorenz template}
  \label{fig:lortemp}
\end{figure}

\begin{figure}
  \centering
  \includegraphics[scale=1.0]{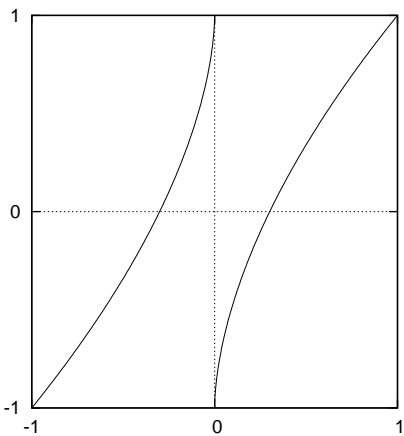}
  \caption{Lorenz map}
  \label{fig:lormap}
\end{figure}

\subsection*{Lorenz braids}
\label{sec:lorbraids}

If the Lorenz template is cut open along the dotted lines in Fig. \ref{fig:lortemp}, then each knot and link on the template can be obtained as the closure of an open braid on the cut-open template, which will be called the \emph{Lorenz braid} associated to the knot or link (\cite{Birman83}). These \emph{Lorenz braids} are simple positive braids (our definition of positive crossing follows Birman and is therefore opposed to an usual convention in knot theory). Each Lorenz braid is composed of $n=p+q$ strings, where the set of $p$ left or $L$ strings cross over at least one (possibly all) of the $q$ right strings, with no crossings between strings in each subset. These sets can be subdivided into subsets $LL$, $LR$, $RL$ and $RR$ according to the position of the startpoints and endpoints of each string. An example of a Lorenz braid is shown in Fig. \ref{fig:lorbraid}, where we adopt the convention of drawing the overcrossing ($L$) strings as thicker lines than the undercrossing ($R$) strings. This convention will be used in other braid diagrams.

The \emph{braid group on n strings} $B_n$ is given by the presentation
$$B_n=\left\langle \sigma_1,\sigma_2,\ldots,\sigma_{n-1} \left\lvert \begin{array}{ll} \sigma_i\sigma_j=\sigma_j\sigma_i & (|i-j|\geq2\\ \sigma_i\sigma_{i+1}\sigma_i=\sigma_{i+1}\sigma_i\sigma_{i+1} & (i=1,\ldots,n-2) \end{array} \right. \right\rangle $$
where the \emph{generator} $\sigma_i$ exchanges the endpoints of strings $i$ and $i+1$ with string $i$ crossing over string $i+1$. In particular, all Lorenz braids can be expressed as products of these generators.

Each Lorenz braid $\beta$ is a simple braid, so it has an associated permutation $\pi$. This permutation has only one cycle \emph{iff} it is associated to a knot, and has $k$ cycles if it is associated to a link with $k$ components (knots).

\begin{figure}
  \centering
  \includegraphics[scale=1.0]{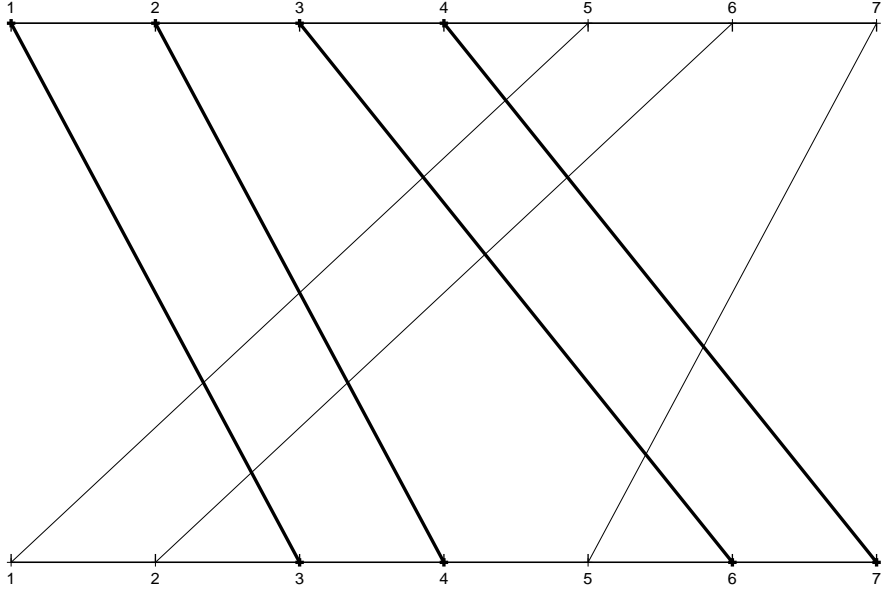}
  \caption{A Lorenz braid}
  \label{fig:lorbraid}
\end{figure}

\subsection*{Symbolic dynamics for the Lorenz map}
\label{sec:symbdynlor}

Let $f^j=f \circ f^{j-1}$ be the $j$-th iterate of the Lorenz map $f$ anf $f^0$ be the identity map. We define the \emph{itinerary} of a point $x$ under $f$ as the symbolic sequence $(i_f(x))_j$, $j=0,1,\ldots$ where
$$(i_f(x))_j=\left\{\begin{array}{lll} L & \mathrm{ if } & f^j(x)<0\\
                                       0 & \mathrm{ if } & f^j(x)=0\\
                                       R & \mathrm{ if } & f^j(x)>0. \end{array} \right.$$

 The itinerary of a point in $[-1,1]\setminus \{0\}$ under the Lorenz map can either be an infinite sequence in the symbols $L,R$ or a finite sequence in $L,R$ terminated by a single symbol $0$ (because $f$ is undefined at $x=0$).  The \emph{length} $|X|$ of a finite sequence $X = X_0 \ldots X_{n-1}0$ is $n$, so it can be written as $X = X_0 \ldots X_{|X|-1}0$. A sequence $X$ is periodic if $X=(X_0 \dots X_{p-1})^{\infty}$ for some $p>1$. If $p$ is the least integer for which this holds, then $p$ is the (least) period of $X$.

The space $\Sigma$ of all finite and infinite sequences can be ordered in the lexicographic order induced by $L < 0 < R$: given $X, Y \in \Sigma$, let $k$ be the first index such that $X_k \neq Y_k$. Then $X<Y$ if $X_k < Y_k$ and $Y < X$ otherwise.

The \emph{shift map} $s:\Sigma\setminus\{0\} \to \Sigma$ is defined as usual by $s(X_0X_1 \ldots)=X_1 \ldots$ (it just deletes the first symbol). From the definition above, an infinite sequence $X$ is periodic \emph{iff} there is $p>1$ such that $s^p(X)=X$. In order to symplify the notation, we will also define the shift operator on finite aperiodic words. Given a $p$-periodic sequence $X=(X_0X_1 \ldots X_{p-1})^{\infty}$ where $w=X_0X_1 \ldots X_{p-1}$ is a finite aperiodic word of length $p$, we define $s(X_0X_1 \ldots X_{p-1})=X_1 \ldots X_{p-1}X_0$. Then $s(X)=(X_1 \ldots X_{p-1}X_0)^\infty=(s(X_0X_1 \ldots X_{p-1}))^\infty$.  This shift operator defined on finite words has an inverse $s^{-1}$, defined by $s(X_0X_1 \ldots X_{p-1})=X_{p-1}X_0X_1 \ldots X_{p-2}$. The sequence \linebreak$w,s(w),\ldots,s^{p-1}(w)$ will also be called the \emph{orbit} of $w$ and a word in the orbit of $w$ will be generally called a shift of $w$.

A (finite or infinite) sequence $X$ is called \emph{L-maximal}
 if $X_0=L$ and for $k>0$, $X_k = L \Rightarrow s^k(X)\leq X$, and \emph{R-minimal} if $X_0=R$ and for $k>0$, $X_k = R \Rightarrow X \geq s^k(X)$. An infinite periodic sequence $(X_0 \ldots X_{n-1})^{\infty}$ with least period $n$ is L-maximal (resp. R-minimal) if and only if the finite sequence $X_0 \ldots X_{n-1}0$ is L-maximal (resp. R-minimal). Given an aperiodic word $w=X_0 \ldots X_n$, we say that $k$ is the maximal (resp. minimal) position in $w$ if $s^k(w)$ is $L$-maximal (resp. $R$-minimal).  

For a finite word $w$, $n_L$ and $n_R$ will denote respectively the number of $L$ and $R$ symbols in $w$, and $n=n_L+n_R$ the lenghth of $w$. Each finite aperiodic word is associated to a Lorenz braid (whose closure is a Lorenz knot), which can be obtained through the following procedure: given a word $w$ of length $n$, order the successive shifts $s(w),s^2(w),\ldots,s^n(w)=w$ lexicographically and associate them to startpoints and endpoints in the associated Lorenz braid, with points corresponding to words starting with $L$ lying on the left half and points corresponding to words starting with $R$ on the right half. Each string in the braid connects the startpoint corresponding to $s^k(w)$ to the endpoint corresponding to $s^{k+1}(w)$. Fig. \ref{fig:lorbraid-word} exemplifies this procedure for $w=LRRLR$.

\begin{figure}
  \centering
  \includegraphics[scale=1.0]{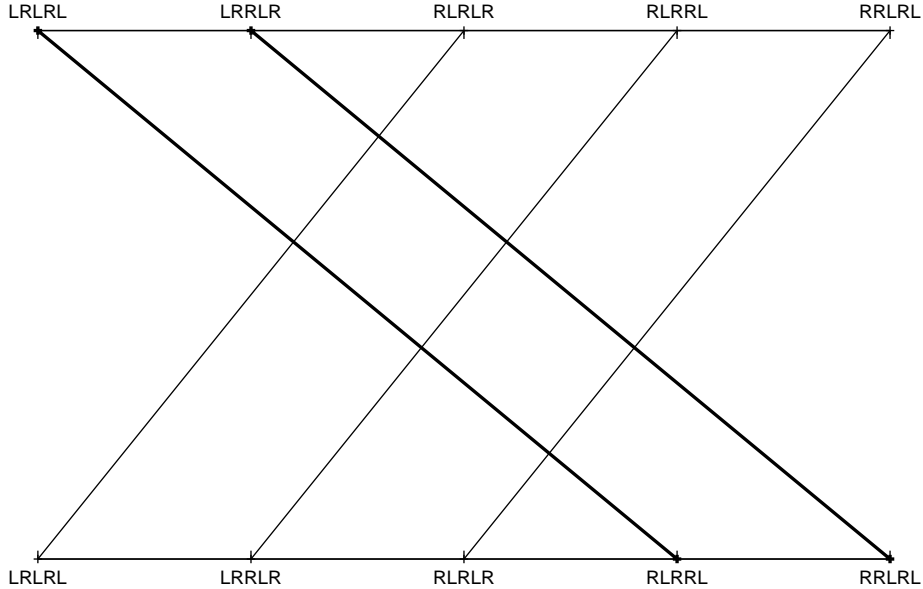}
  \caption{Lorenz braid corresponding to $w=LRRLR$}
  \label{fig:lorbraid-word}
\end{figure}

Each periodic orbit of the flow has a unique corresponding orbit in the Lorenz map, which in turn corresponds to the cyclic permutation class of one aperiodic word in the symbols $L,R$ as clearly all periodic sequences resulting from shifting a given sequence represent points in the same periodic orbit (see \cite{Birman83}).

The \emph{crossing number} is the smallest number of crossings in any diagram of a knot $K$. The \emph{braid index} is the smallest number of strings among braids whose closure is $K$.

The \emph{trip number} $t$ is the number of syllables (subwords of type $L^aR^b$ with maximal length) in an aperiodic word. The trip number of a Lorenz link is the sum of the trip numbers of its components. Franks and Williams \cite{Franks87}, followed by Waddington \cite{Waddington96}, proved that the braid index of a Lorenz knot is equal to its trip number. This result had previously been conjectured by Birman and Williams \cite{Birman83}, who defined a minimal $t-$braid for Lorenz links \cite{Birman09}:

The minimal braid of a Lorenz link corresponding to a Lorenz braid $\beta$ is given by
$$\Delta^2 \prod_{i=1}^{t-1}\, (\sigma_1 \ldots \sigma_i)^{n_i} \prod_{i=t-1}^1 (\sigma_{t-1}\ldots \sigma_i)^{m_{t-i}}$$
where $\Delta^2$ is the full-twist in $t$ strings, and, denoting by $\pi$ the permutation associated with the Lorenz braid, the exponents are
$$ \begin{array}{l} n_i=\#\{j: \pi(j) - j = i+1 \land \pi(j)<\pi^2(j)\}\\ m_i = \#\{j: j - \pi(j) = i+1 \land \pi(j)>\pi^2(j)\} \end{array} $$ 

\subsection*{Goal and plan}
\label{sec:plan}

The purpose of this article is the classification of knots corresponding to syllable permutations of the standard word of a torus knot, that is, words composed of the same number of $LR^k$ and $LR^{k+1}$ syllables as the torus knot standard word, defined below, arranged in a different order.

In Section \ref{sec:torsylperm} we characterize the symbolic words obtained by permuting the sylalbles of words corresponding to torus knots and their associated Lorenz braids. Section \ref{sec:Lorenzsat} deals with braids of Lorenz satellite knots and an algorithm to find their corresponding words, also proving the permuted words cannot be obtained this way. Finally, in Section \ref{sec:Lorenztor} we show that in each set of syllable permutations of a torus knot word, there is at most one word that can possibly correspond to another torus knot, and prove that for some classes of these sets there is no such word.

\section{Torus knots and syllable permutations}
\label{sec:torsylperm}

As mentioned above, all torus knots are Lorenz knots. The torus knot $T(p,q)$ is the closure of a Lorenz braid in $n=p+q$ strings, with $p$ left or $L$ strings that cross over $q$ right or $R$ strings, such that each $L$ string crosses over all the $R$ strings. The Lorenz braid of a torus knot thus has the maximum number of crossings ($pq$) for a Lorenz braid with $p$ $L$ strings and $q$ $R$ strings. Since $T(q,p)=T(p,q)$ we will only consider torus knots $T(p,q)$ with $p<q$. 
The structure of the Lorenz braid of a torus knot $T(p,q)$ ($p<q$) is sketched in Fig. \ref{fig:torbraid}, where only the first and last $L$ strings and some of the $R$ strings are drawn. The remaining $L$ ($R$) strings are parallel to the $L$ ($R$) strings shown.

\begin{figure}
  \centering
  \includegraphics{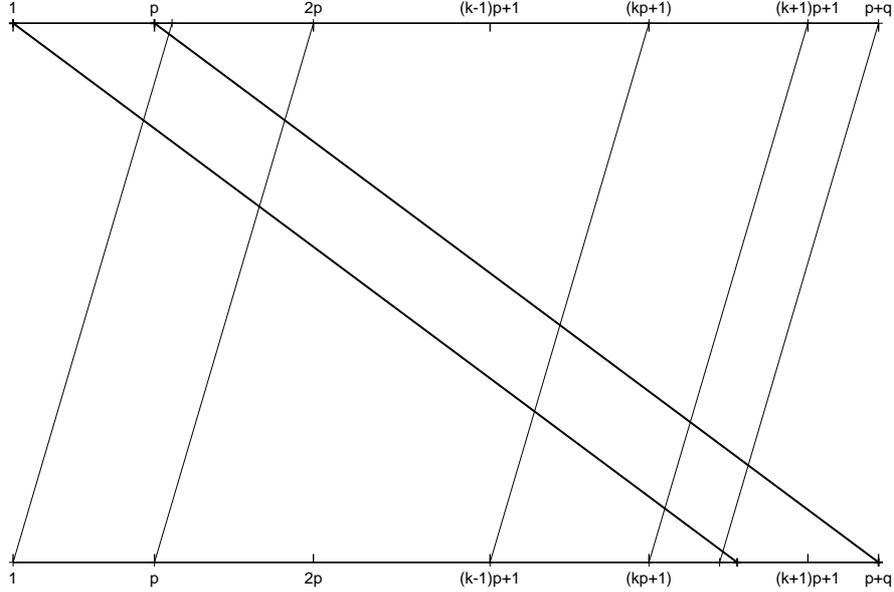}
  \caption{Lorenz braid of $T(p,q) (p<q)$}
  \label{fig:torbraid}
\end{figure}

Lorenz knots corresponding to orbits in the Lorenz template which are represented by evenly distributed words in the alphabet $\{L,R\}$ are torus knots \cite{Birman83}.  Also, given a torus knot $T(p,q)$ there is an evenly distributed word with $n_L=p$, $n_R=q$, that represents it. There is thus a bijection between torus knots and cyclic permutation classes of evenly distributed words. We will call the $L-maximal$ word that represents $T(p,q)$ the \emph{standard word} $w(p,q)$ for $T(p,q)$. These are also the words generated in the symbolic Farey tree \cite{Franco08}. Let $k>0$ be the quotient of the integer division of $q$ by $p$ and $0<r<p$ the remainder ($q=kp+r$). Then $w(p,q)$ has $p$ syllables: $r$ $LR^{k+1}$ syllables and $p-r$ $LR^k$ syllables, evenly distributed. This aperiodic word is unique for each $p,q$ satisfying the conditions above.

\begin{lem}
  Any word resulting from permuting the syllables of the standard word $w(p,q)$ is aperiodic and therefore corresponds to a Lorenz knot with the same braid index.
\end{lem}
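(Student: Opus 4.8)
The plan is to prove the two halves of the statement in turn: \emph{(i)} every word $w'$ obtained by rearranging the $p$ syllables of $w(p,q)$ is aperiodic, and \emph{(ii)} the Lorenz knot it codes has the same braid index as $T(p,q)$, namely $p$. For \emph{(i)} I would argue by a short divisibility count on letter occurrences; for \emph{(ii)} I would check that permuting the syllables leaves the trip number unchanged and then quote the Franks--Williams--Waddington theorem together with the construction recalled above that attaches a Lorenz knot to each finite aperiodic word.

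For \emph{(i)}: rearranging syllables changes neither the number of $L$'s nor the number of $R$'s, so $w'$ has $n_L=p$ and $n_R=q$, exactly as $w(p,q)$. Since $T(p,q)$ is a knot we have $\gcd(p,q)=1$. Suppose, for a contradiction, that $w'$ is periodic, i.e.\ $w'=u^m$ for some finite word $u$ and some $m\ge 2$. Counting the letter $L$ on both sides gives $p=m\cdot n_L(u)$, so $m\mid p$; counting $R$ gives $m\mid q$; hence $m\mid\gcd(p,q)=1$, contradicting $m\ge 2$. Thus $w'$ is aperiodic, and by the procedure recalled above it is the symbolic word of a well-defined Lorenz knot $K'$, its Lorenz braid being associated with a single $n$-cycle precisely because the word is aperiodic.

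For \emph{(ii)}: since $p<q$ we have $k=\lfloor q/p\rfloor\ge 1$, so every block $LR^{a}$ appearing in $w'$ has $a\ge 1$; consequently, reading $w'$ cyclically, each block contributes exactly one maximal run of the letter $L$, and these runs are separated by the nonempty runs $R^{a}$. Hence the syllable decomposition of $w'$ is exactly the list of its $p$ blocks, so its trip number is $p$, the same as the trip number of $w(p,q)$. Applying the result of Franks and Williams (refined by Waddington) that the braid index of a Lorenz knot equals its trip number, we obtain that $K'$ has braid index $p=\min(p,q)$, which is precisely the braid index of $T(p,q)$.

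I do not foresee a serious obstacle. The only steps that require a little care are the remark that $\gcd(p,q)=1$ --- immediate from $T(p,q)$ being a knot, and in any case forced by the asserted aperiodicity of $w(p,q)$ itself --- and the verification in \emph{(ii)} that no two of the permuted blocks can merge into a single longer syllable, which is exactly where the standing hypothesis $p<q$ (equivalently $k\ge1$) is used; without it a block could be a bare $L$ and the syllable count could drop.
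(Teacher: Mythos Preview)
Your proof is correct and follows essentially the same approach as the paper's: the aperiodicity argument via the divisibility count $m\mid p$, $m\mid q$, $\gcd(p,q)=1$ is exactly the paper's proof. The paper actually proves only part~(i) explicitly and leaves the braid-index claim implicit in the surrounding text; your treatment of part~(ii), verifying that the trip number stays equal to $p$ and invoking Franks--Williams--Waddington, simply spells out what the paper takes for granted.
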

\begin{proof}
  Let $T(p,q)$, $p<q$ be a torus knot and $w$ a syllable permutation of $w(p,q)$. Then $n_L(w)=p$ and $n_R(w)=q$. If there was a subword $v$ of $w$ such that $w=v^j$ for some $j>1$, then we would have $p=n_L(w)=jn_L(v)$ and $q=n_R(w)=jn_R(v)$ and $p,q$ wouldn't be relatively prime.
\end{proof}

\begin{defin}
  We define $P(p,q)$ as the set of $L$-maximal words resulting from permutations of syllables of the standard word $w(p,q)$ for $T(p,q)$.
\end{defin}

\begin{rem}
  Any word resulting from the permutation of syllables of the standard word for $T(p,q)$ is a shift of some word in $P(p,q)$ and is therefore equivalent to that word, \emph{i.e.} it corresponds to the same Lorenz knot.
\end{rem}

\begin{rem}
  The set of distinct permutations of $p$ syllables, where respectively $r$ and $p-r$ are identical, contains $\frac{p!}{r!(p-r)!}$ words. This set contains, for each $L$-maximal word $w$, $p$ words that are shifts of $w$ (including itself). To keep only the $L$-maximal words we must therefore further divide by $p$. Each set $P(p,q)$, $q=kp+r$ has therefore exactly $\frac{(p-1)!}{r!(p-r)!}$ words.
\end{rem}

\begin{rem}\label{rem:p>4}
  If $2 \leq p \leq 4$, then $P(p,q)$ contains only the standard word for $T(p,q)$: $P(p,q)=\{w(p,q)\}$.
\end{rem}

\begin{rem}\label{rem:r=1,p-1}
  If $q=kp+r$ and $r=1$ or $r=p-1$, then the only possible $L$-maximal word in $P(p,q)$ is $LR^{k+1}(LR^k)^{p-1}$ ($r=1$) or $(LR^{k+1})^{p-1}LR^k$ ($r=p-1$), so in both cases $P(p,q)$ contains only the standard word for $T(p,q)$.
\end{rem}

In what follows, we therefore assume, whenever necessary, $p>4$ and $1<r<p-1$.

Next we find bounds for the number of crossings in the Lorenz braid and in the minimal Birman-Williams (BW) braid. For a knot which is the closure of a positive braid, the genus $g$ is related to the number of crossings $c$ and the number of strings $n$ by \cite{Birman83}
\begin{equation}
  \label{eq:1}
  2g=c-n+1
\end{equation}

In the symbolic words under study there are no consecutive $L$ symbols, so for each knot in $P(p,q)$ the trip number is $t=n_L=p$. The minimal BW braid \cite{Birman83} is a $p$-string braid and the number of crossings (the length of the braid word) in the BW braid is the crossing number of the knot, because in Eq. \ref{eq:1}, if $c$ takes the minimum value, then $n$ must also be minimum, as the genus $g$ is a knot invariant. 

We start by investigating the structure of the braids that correspond to the words with permuted syllables. Let $w$ be an aperiodic word resulting from permuting the syllables of the standard word $w(p,q)$. Note that the shifts of $w$ can be grouped and ordered lexicographically: $$LR^kL \dotso <LR^{k+1}\dotso <RL\dotso<R^2L\dotso<\ldots<R^kL\dotso<R^{k+1}L \dotso$$

Also, there are:
\begin{itemize}
\item exactly $p$ words (and corresponding strings) with the form $LR\dotso$, of which $r$ are $LR^{k+1}\dotso$ and $p-r$ are $LR^kL\dotso$;
\item $p$ words with each of the forms  $RL\dotso$, $R^2L\dotso$, $\dotso$ , $R^kL\dotso$;
\item $r$ words with the form $R^{k+1}L$.
\end{itemize}

Moreover:
\begin{enumerate}
\item All the words of type $LR\dotso$ are shifts some word of type $RL\dotso$, and the shift operator preserves lexicographical order.

For $1\leq i \leq k-1$, the words of type $R^iL\dotso$ are the shifts of $R^{i+1}L\dotso$ words, with order preserved.
\item The $p$ words of type $R^kL\dotso$ are shifts of the $r$ $R^{k+1}L\dotso$ and the $p-r$ $LR^kL\dotso$ words, the shift operator preserving the order within each subset.

The least word with the form $R^kL\dotso$, $w_l$, is the shift of the least $R^{k+1}L\dotso$ word. To see this, assume that $w_l$ is the shift of a word of type $LR^kL\dotso$. This word has a preimage under the iterated shift operator with the form $R^kLR^kL\dotso$, resulting from appending an $R^kL$ syllable to the beginning of $w_l$, and therefore $R^kLR^kL\dotso < w_l=R^kL\dotso$, which contradicts the fact that $w_l$ is the least $R^kL\dotso$ word.

Analogously, the greatest $R^kL\dotso$ word, $w_g$, is the shift of the greatest $LR^kL\dotso$ word. Assume that $s^{-1}(w_m)$ is of type $R^{k+1}L\dotso$; $w_m$ has the form $R^kLR^{k+1}L\dotso$ (otherwise it wouldn't be the greatest $R^kL\dotso$), so $s^{-1}(w_m)$ must have the form $R^{k+1}LR^{k+1}L\dotso$. This word has a preimage under the shift with the form $R^kLR^{k+1}LR^{k+1}\dotso$ and is greater than $w_g$ (it has one more syllable $R^{k+1}$ immediately following the first $R^kL$), which contradicts the fact that $w_g$ is the greatest $R^kL\dotso$ word.
\item The $r$ words of type $R^{r+1}L\dotso$ are the shifts of the $r$ $LR^{k+1}\dotso$ words, with order preserved.
\end{enumerate}

The Lorenz braids corresponding to the permuted words thus have the structure sketched in Fig. \ref{fig:2}. The dotted lines and enclosed shaded regions represent two sets of strings and which both have their endpoints in the range $\{kp+2, \ldots, (k+1)p-1\}$, so that the strings from the first set with startpoints $1, \ldots, p-r-1$ will possibly cross over the strings with startpoints $(k+1)p+2, (k+1)p+r=p+q$,  preserving the order in each set of strings.

\begin{figure}
  \centering
  \includegraphics{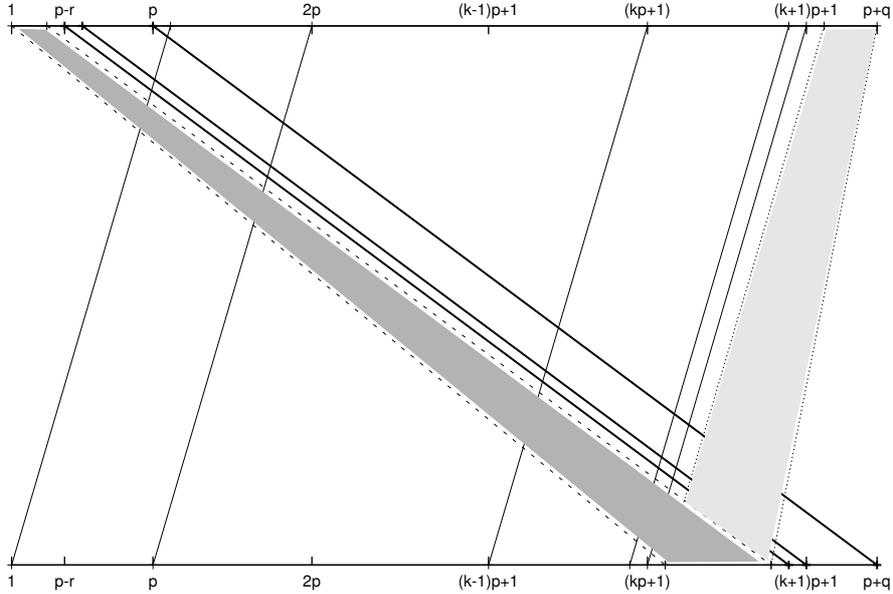}
  \caption{Lorenz braid corresponding to a syllable permuted word}
  \label{fig:2}
\end{figure}

The Lorenz braids corresponding to the permuted words have a number of crossing points $c$ that satisfies

\begin{equation}
  \label{crossings}
  (p-r-1)(kp+1)+(r+1)(kp+r)\leq c \leq p(kp+r)=pq.
\end{equation}

For the standard word $w(p,q)$, since all the $p$ $L$ strings cross over the $q=kp+r$ $R$ strings, we have $c=p(kp+r)$. 

\begin{lem}\label{order}
For the word $(LR^{k+1})^r(LR^k)^{p-r}$, the corresponding braid has the structure illustrated in Fig. \ref{fig:3}, with exactly $(p-r-1)(kp+1)+(r+1)(kp+r)$ crossings. Since the $L$ strings have the leftmost possible endpoints and the $R$ strings the maximum possible endpoints, this braid has the minimum number of crossings in $P(p,q)$ and $(LR^{k+1})^r(LR^k)^{p-r}$ is the only word in $P(p,q)$ corresponding to this number of crossings.
\end{lem}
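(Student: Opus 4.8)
The plan is to analyze the lexicographic ordering of the shifts of $w = (LR^{k+1})^r(LR^k)^{p-r}$ explicitly, read off the resulting braid, and count crossings. First I would list the shifts of $w$: each shift begins either with a cyclic rotation inside a syllable block. The key observation is that because all $k+1$-power syllables are grouped at the front and all $k$-power syllables at the back, the shifts that begin with $LR^{k+1}\dotso$ are exactly those obtained by rotating to the start of one of the first $r$ syllables, and the shifts beginning $LR^kL\dotso$ correspond to the last $p-r$ syllables; within each group the lexicographic order is inherited from the cyclic position, and one checks directly (comparing the tails, which eventually hit the junction between the two blocks) that every $LR^{k+1}$-shift is lexicographically smaller than every $LR^k L$-shift precisely because after the common prefix $LR^k$ the former continues with $R$ and the latter with $L$. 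This pins down the permutation $\pi$ associated to the braid completely.

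Next I would translate this permutation into the braid picture of Fig.~\ref{fig:2}, i.e. verify that it specializes to Fig.~\ref{fig:3}: the $r$ strings of type $LR^{k+1}\dotso$ have startpoints $1,\dots,r$ and, by item (3) in the discussion preceding the lemma together with the ordering just established, their endpoints are the $r$ largest, namely $kp+r-(r-1),\dots,kp+r$ — wait, more precisely the endpoints corresponding to the $R^{k+1}L\dotso$ shifts, which are the last $r$ positions $\{q - r + 1,\dots,q\}$ reindexed into the full braid. The $p-r$ strings of type $LR^kL\dotso$ then get the remaining leftmost endpoints. The point is that in this word the $LR^{k+1}$ syllables occupy the extreme left startpoints and their strings land on the extreme right endpoints, so each such string crosses the maximal number of $R$-strings, while each $LR^k$ string crosses the minimal number consistent with being a syllable-permuted word. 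Summing: the $r+1$ strings that reach the far right contribute $(r+1)(kp+r)$ crossings and the remaining $p-r-1$ strings contribute $(p-r-1)(kp+1)$, giving the stated total $c = (p-r-1)(kp+1) + (r+1)(kp+r)$, which is exactly the lower bound in \eqref{crossings}.

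For minimality within $P(p,q)$: any word in $P(p,q)$ has $c$ bounded below by \eqref{crossings}, and the bound is attained iff every $LR^{k+1}$-string lands on a maximal endpoint and every $LR^k$-string lands on a minimal-length endpoint, i.e. iff the startpoint/endpoint incidence is exactly the one forced above. I would argue that this incidence pattern forces the syllable sequence to be $(LR^{k+1})^r(LR^k)^{p-r}$: if some $LR^k$ syllable preceded some $LR^{k+1}$ syllable cyclically, the corresponding shift comparison shows that not all $LR^{k+1}$-shifts are smaller than all $LR^kL$-shifts, so the grouping into the two extreme blocks of endpoints breaks, and at least one string picks up an extra crossing, strictly increasing $c$. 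Hence the word realizing the minimum is unique.

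The main obstacle I expect is the bookkeeping in the second step: carefully matching shift classes to startpoint/endpoint indices in the full $n=p+q$ string braid (as opposed to the $t=p$-string minimal braid) and confirming that the geometric picture of Fig.~\ref{fig:3} — in particular that the two shaded regions of Fig.~\ref{fig:2} degenerate so that the crossing count collapses to the clean product formula — is exactly right, including the off-by-one contributions from the single boundary string of type $R^kL\dotso$ that is the shift of the least $R^{k+1}L\dotso$ word (item (2) above). Once the permutation is nailed down this is routine, but it is the step where an index error would be easy to make.
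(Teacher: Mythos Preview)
Your approach is essentially the paper's: order the shifts explicitly, read off the permutation, and count displacements. The paper executes this by focusing only on the shifts of the form $R^kL\dotso$ (since, as noted just before the lemma, these are the only endpoints that can vary across $P(p,q)$), listing them in lexicographic order together with their $s^{-1}$-preimages, and observing that for this particular word the $p-r-1$ ``free'' $L$-strings land at positions $kp+2,\dots,kp+p-r$ while the remaining $r+1$ strings land at the top, giving displacements $kp+1$ and $kp+r$ respectively.

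However, you have the lexicographic comparison backwards. Since $L<0<R$, after the common prefix $LR^k$ the continuation $L$ is \emph{smaller} than the continuation $R$, so $LR^kL\dotso < LR^{k+1}\dotso$, not the reverse (the paper states this ordering explicitly in the display just before the lemma). Consequently the $LR^kL$-shifts occupy startpoints $1,\dots,p-r$ and the $LR^{k+1}$-shifts occupy startpoints $p-r+1,\dots,p$, the opposite of what you wrote. Your final crossing count $(p-r-1)(kp+1)+(r+1)(kp+r)$ is nonetheless correct, but for the right reason: the $r$ strings of type $LR^{k+1}$ (startpoints $p-r+1,\dots,p$) end among the $R^{k+1}L$ positions with displacement $kp+r$, and by item~(2) above the string from startpoint $p-r$ (the greatest $LR^kL$-shift) also has displacement $kp+r$, giving $r+1$ such strings; the remaining $p-r-1$ strings from startpoints $1,\dots,p-r-1$ land at positions $kp+2,\dots,kp+p-r$ with displacement $kp+1$. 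This is exactly the bookkeeping slip you anticipated in your last paragraph; once the inequality is flipped the rest of your outline goes through and coincides with the paper's argument. Your uniqueness paragraph is actually more explicit than what the paper writes.
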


\begin{proof}
The only endpoints that can change from one permuted braid corresponding to a word $w$ in $P(p,q)$ are those corresponding to shifts of $w$ with the form $R^kL\ldots$. These can be ordered lexicographically as follows:

\begin{align*}
  s\left(R^{k+1}(LR^k)^{p-r}(LR^{k+1})^{r-1}L \right) &= R^k(LR^k)^{p-r}(LR^{k+1})^{r-1}LR < \\
< s\left((LR^k)^{p-r}(LR^{k+1})^r \right) &= R^k(LR^k)^{p-r-1}(LR^{k+1})^rL < \\
< s\left((LR^k)^{p-r-1}(LR^{k+1})^r \right) &= R^k(LR^k)^{p-r-2}(LR^{k+1})^rLR^kL < \ldots\\
\ldots < s\left(LR^k(LR^{k+1})^r(LR^k)^{p-r-1} \right) &= R^k(LR^k)(LR^{k+1})^r(LR^k)^{p-r-2}L<\\
< s\left(R^{k+1}(LR^{k+1})(LR^k)^{p-r}(LR^{k+1})^{r-2}L\right) &= R^k(LR^{k+1})(LR^k)^{p-r}(LR^{k+1})^{r-2}LR<\ldots\\
\ldots < s\left(R^{k+1}(LR^{k+1})^{r-1}(LR^k)^{p-r}L \right) &= R^k(LR^{k+1})^{r-1}(LR^k)^{p-r}LR <\\
< s\left(LR^k(LR^{k+1})^r(LR^k)^{p-r-1} \right) &= R^k(LR^{k+1})^r(LR^k)^{p-r-1}L
\end{align*}

Identifying the words between brackets on the left with the startpoints of the corresponding strings and the ordered words on the right with the matching endpoints, we get the braid represented  in Fig. \ref{fig:3}.
\end{proof}

\begin{figure}
  \centering
  \includegraphics{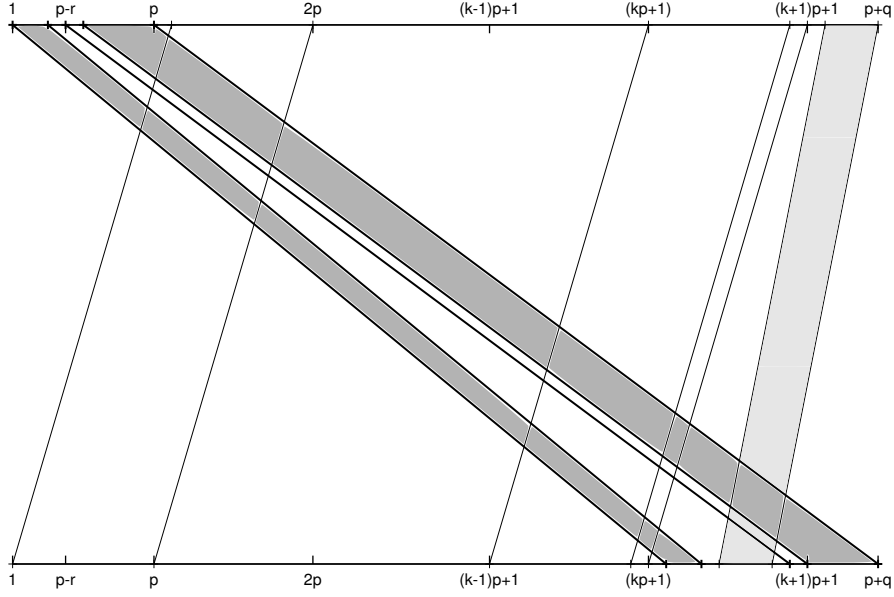}
  \caption{Lorenz braid of $(LR^{k+1})^r(LR^k)^{p-r}$}
  \label{fig:3}
\end{figure}

The Lorenz braid of $(LR^{k+1})^r(LR^k)^{p-r}$ thus has exactly $(p-r-1)(kp+1)+(r+1)(kp+r)$ crossing points. This is the only braid in $P(p,q)$ with this number of crossings.

Using Eq. \ref{eq:1} we can now find bounds for the genus $g$ of knots that are the closure of braids in $P(p,q)$.

\begin{equation}
  \label{eq:genus}
  kp(p-1)+r(r-1) \leq 2g \leq (p-1)(kp+r-1)=(p-1)(q-1)
\end{equation}

Again, the minimum corresponds to $(LR^{k+1})^r(LR^k)^{p-r}$ while the maximum is the double of the torus knot genus and therefore corresponds to the standard word $w(p,q)$.

\section{Lorenz satellite knots}
\label{sec:Lorenzsat}

El-Rifai studied Lorenz knots which are satellites of Lorenz knots \cite{Elrifai88} \cite{Elrifai99}. His construction of a satellite knot or link can be interpreted in terms of Lorenz braids, as follows:

\begin{itemize}
\item Take three Lorenz braids $A$ (left pattern), $B$ (right pattern) and $C$ (companion) whose closures are Lorenz knots, such that $n_R(A)=n_L(B)=k$.
\item Inflate the braid $C$ by replacing each string with $k$ parallel strings, thus obtaining an intermediate braid corresponding to a link with $k$ components.
\item Add $n_L(A)$ vertical strings to the left and $n_R(B)$ vertical strings to the right (corresponding to identity permutations of these strings). The number of strings of the resulting braid is $n=n_L(A)+n_R(B)+kn(C)$, where $n(C)$ is the number of strings in $C$.
\item Concatenate this braid with the $n$-braid obtained by putting $A$ on the left, $B$ on the right and extend the remaining strings in $C$ with vertical strings, between $A$ and $B$.
\end{itemize}

The procedure is illustrated in Fig. \ref{fig:satbraid2} for braids $A$, $B$, $C$ corresponding to the words $w_A=LRRLR$, $w_B=LRRLRLR$ and $w_C=LRRLR$, respectively.

The construction can also be carried out with only one pattern braid $A$ or $B$. In that case, take the identity $k$-braid for $B$ ($A$) and assume $n_R(B)=0$ ($n_L(A)=0$) in the third step.

\begin{figure}
  \centering
  \includegraphics{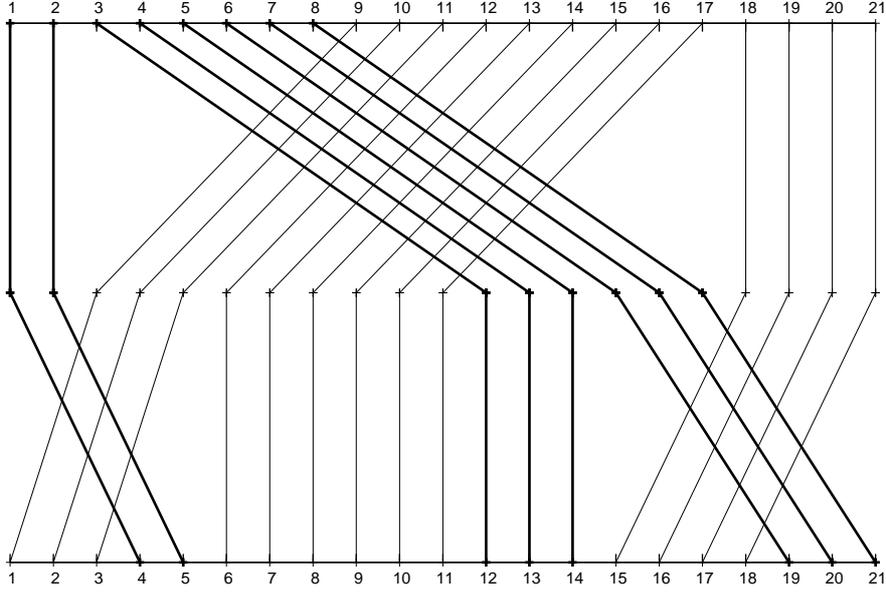}
  \caption{Satellite for $A=LRRLR$, $B=LRRLRLR$ and $C=LRRLR$}
  \label{fig:satbraid2}
\end{figure}

\begin{rem}
  The closure of the resulting braid can be a knot or a link. The braids $A$ and $B$ induce a permutation of the inflated strings of $C$ which is the product of permutations associated to $A$ and $B$. If this permutation has more than one cycle, then the resulting braid is a link. An example is given below, with $A=LRRLR$, $B=LRLRL$ and $C=LRRLR$ (Fig. \ref{fig:satbraid1}).
\end{rem}

\begin{figure}
  \centering
  \includegraphics{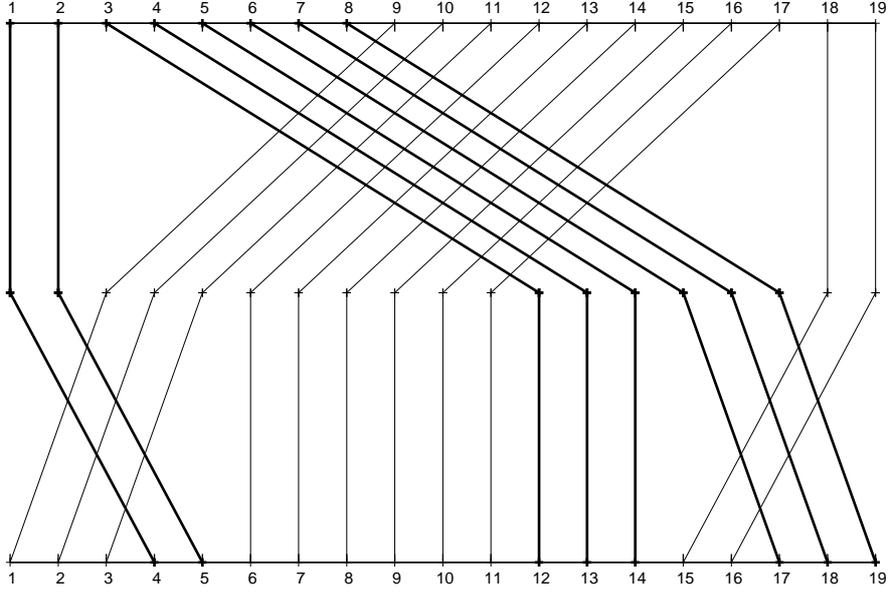}
  \caption{Link for $A=LRRLR$, $B=LRLRL$ and $C=LRRLR$}
  \label{fig:satbraid1}
\end{figure}

In order to obtain the aperiodic word of the satellite knot that is the closure of the Lorenz braid constructed as above, we start by defining permutations on the $L$ and $R$ points, respectively, of a Lorenz braid whose closure is a Lorenz knot, or equivalently on words starting respectively with $L$ or $R$ in the symbolic orbit associated to the braid.

\begin{defin}\label{def:LRperm}
  Let $b$ be a Lorenz $n$-braid that closes to a knot, $\pi(b)$ the permutation associated to $b$ and $w$ the corresponding $L$-maximal aperiodic symbolic word. We label the $n_L$ words which start with $L$ in the orbit of $w$ as $l_1,\dots,l_{n_L}$ and the $n_R$ words starting with $R$ as $r_1,\dots,r_{n_R}$, such that, under the lexicographic order, $l_1 < \dots < l_{n_L}$ and $r_1 < \dots < r_{n_R}$.

We define the $L$-permutation associated to $b$, $\pi_L$ for $i=1,\dots,n_L$ as $\pi_L(i) = j$ if $l_j$ is the first shifted word that starts with an $L$ in the orbit $s(l_i),\ldots,s^n(l_i)$. The $R$-permutation $\pi_R$ is defined for $i=1,\dots,n_R$ as $\pi_R(i)=j$ if $r_j$ is the first shifted word that starts with an $R$ in the orbit $s(r_i),\ldots,s^n(r_i)$. Maps $\pi_L$ and $\pi_R$ are therefore the first return maps on the sets of $L$-started and $R$-started words, respectively. 
\end{defin}

\begin{example}
  For $w=LRRLR$, the shifted words, ordered lexicographically, are $l_1=LRLRR$, $l_2=LRRLR$, $r_1=RLRLR$, $r_2=RLRRL$, $r_3=RRLRL$, so $\pi_L(1)=2$, $\pi_L(2)=1$, $\pi_R(1)=2$, $\pi_R(2)=3$, $\pi_R(3)=1$.
\end{example}

\begin{prop}
  The Lorenz braid constructed as above from Lorenz braids $A$, $B$ and $C$, with $n_R(A)=n_L(B)$, is associated to a cyclic permutation and therefore closes to a knot iff $\pi_R(A) \pi_L(B)$ is cyclic (has only one cycle). If only one braid $A$ or $B$ is used in the construction, then the closure of the resulting Lorenz braid is always a Lorenz knot.
\end{prop}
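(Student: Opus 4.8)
The plan is to analyse directly the permutation $\pi$ of the constructed $n$-braid and to reduce the question of whether $\pi$ is a single $n$-cycle to the same question for a permutation on only $k=n_R(A)=n_L(B)$ symbols, by passing to a first-return map. First I would fix coordinates for the $n=n_L(A)+n_R(B)+k\,n(C)$ strings: the $n_L(A)$ left extra strings; then $n(C)$ consecutive blocks (``cables'') of $k$ strings coming from the inflation of $C$, a string being specified by a cable index $m$ with $1\le m\le n(C)$ together with an offset $j$ with $1\le j\le k$; then the $n_R(B)$ right extra strings. From the construction: cable $1$ is exactly the set of $R$-strings of $A$, cable $n(C)$ is exactly the set of $L$-strings of $B$, cables $2,\dots,n(C)-1$ are vertical inside the factor carrying $A$ and $B$, the extra strings are vertical inside the factor carrying the inflated $C$, and, because the $k$ copies of each strand of $C$ run parallel, the inflated-$C$ factor sends cable $m$, offset $j$ to cable $\pi(C)(m)$, offset $j$, where $\pi(C)$ is the $n(C)$-cycle associated to the companion. (One may assume $n(C)\ge 2$; otherwise the construction degenerates.)

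The heart of the argument is to identify the first-return map of $\pi$ to the section $\Sigma$ consisting of the $k$ strings of cable $n(C)$, i.e.\ the $L$-strings of $B$. Tracing a $\pi$-orbit from a point of $\Sigma$ at offset $j$: it first runs through the $A/B$-factor along $B$, and each excursion onto an $R$-string of $B$ is routed back to the matching $R$-string exactly as in the closure of $B$ alone, precisely because the right extra strings are vertical and the braid is closed; hence the orbit iterates $\pi(B)$ until it first returns to an $L$-string of $B$, which by Definition~\ref{def:LRperm} is offset $\pi_L(B)(j)$. It then leaves cable $n(C)$, and since $\pi(C)$ is a single cycle on $n(C)\ge 2$ cables, it traverses only vertical middle cables, with offset unchanged, and reaches cable $1$ before it can return to cable $n(C)$. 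Running through the $A/B$-factor along $A$ there, the symmetric reasoning, with the left extra strings playing the role of the right ones and $\pi_R(A)$ that of $\pi_L(B)$, moves the offset to $\pi_R(A)(\pi_L(B)(j))$, after which the orbit again runs through vertical middle cables back to $\Sigma$. Hence the first-return map of $\pi$ to $\Sigma$ is $\pi_R(A)\,\pi_L(B)$, a permutation of $\{1,\dots,k\}$.

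To conclude I would observe that every $\pi$-orbit meets $\Sigma$: an orbit on a cable reaches cable $n(C)$ along the $\pi(C)$-cycle; an orbit on a left (resp.\ right) extra string enters the factor carrying $A$ (resp.\ $B$) and, since $\pi(A)$ (resp.\ $\pi(B)$) is a single cycle, must eventually hit an $R$-string of $A$, i.e.\ cable $1$ (resp.\ an $L$-string of $B$, i.e.\ cable $n(C)$), and hence reach $\Sigma$. Once every orbit meets the section, the number of cycles of $\pi$ equals the number of cycles of its first-return map to $\Sigma$; thus $\pi$ is an $n$-cycle, equivalently the closure is a knot, if and only if $\pi_R(A)\pi_L(B)$ is cyclic. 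For the final sentence: if only $A$ is used, then $B$ is the identity $k$-braid, the strings of cable $n(C)$ are vertical inside the $A/B$-factor as well, and the first-return map to $\Sigma$ reduces to $\pi_R(A)$; being the first-return map of the single cycle $\pi(A)$ to the nonempty set of $R$-strings of $A$, $\pi_R(A)$ is itself a single cycle, so the closure is always a knot. The case in which only $B$ is used is symmetric.

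The step I expect to be the main obstacle is the identification of the first-return map in the second paragraph: one must check carefully that the way the $A/B$-factor together with the braid closure reroutes an $R$-excursion of $B$, and an $L$-excursion of $A$, back into the pattern coincides with the closure-identification used in Definition~\ref{def:LRperm}, and that between the visit to cable $n(C)$ and the visit to cable $1$, and on the way back, the orbit never re-enters those two cables, so that offsets really are transported unchanged. Both reduce to careful bookkeeping of string positions in the construction, combined with the fact that $\pi(C)$ is a single cycle; I do not anticipate any essential difficulty beyond that.
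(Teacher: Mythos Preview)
Your argument is correct and follows essentially the same route as the paper's: both reduce the question to the cyclicity of a permutation on $k$ symbols by tracking how a $\pi$-orbit moves between the $k$ parallel copies of the companion, you via the first-return map of $\pi$ to the $k$-point section $\Sigma$, the paper via the sequence of component-sets $C_1,\ldots,C_k$ visited along an orbit (obtaining $\pi_L\pi_R$ and then noting it is conjugate to $\pi_R\pi_L$). Your version is in fact slightly more complete, since the paper's proof does not spell out the single-pattern case that you handle in your last paragraph.
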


\begin{proof}
  Let $p_A=n_L(A)$, $q_B=n_R(B)$ and $n_C=n(C)$. Let $\pi$ be the permutation corresponding to the satellite braid constructed as above. For simplicity, denote $\pi_R(A)$ and $\pi_L(B)$ by $\pi_R$ and $\pi_L$, respectively. The orbit of a point $p$ under the permutation $pi$ is $\left( p,\pi(p),\ldots,\pi^{-1}(p)\right)$. The permutation $\pi$ is cyclic \emph{iff} given a point $p$, its orbit contains all points. We will subdivide the points in the satellite braid in te following sets:
  \begin{itemize}
  \item $A$ contains the first $p_A+k$ points, corresponding to the original braid $A$;
  \item $B$ contains the last $q_B+k$ points, corresponding to the original braid $B$;
  \item The remaing points (in the central part) are subdivided in $k$ sets $C_i$, $i=1,\ldots,k$ according to the component of the inflated braid they belong to.
  \end{itemize}
  Note that:
\begin{itemize}
\item If $p \in C_i$, then either $\pi(p) \in C_i$ or $\pi(p) \in A$ or $\pi(p) \in B$;
\item If $p \in A$ ($p \in B$) then the first point in the orbit of $p$ not in $A$ ($B$) will be in one of the sets $C_i$.
\item If $p \in C_i$ and $\pi(p) \in A$ ($\pi(p) \in B$) then the first point in the orbit of $p$ not contained in $A$ ($B$) will be contained in a set $C_j$ ($j \neq i$), such that $j=\pi_R(i)$ ($j=\pi_L(i))$, where $\pi_L$ and $\pi_R$ are the $k$-permutations defined above.
\end{itemize}
Consider, for example, the point $p$ with index $p_A+1 \in A$. The sequence of sets visited by the orbit of $p$ is $(A,C_1,B,C_{\pi_R(1)},A,C_{\pi_L(\pi_R(1))},B,\ldots,B,C_{\pi_L^{-1}(1)}$. The orbit of $p$ contains all the points \emph{iff} the sequence contains each set $C_i$ exactly twice (one immediately after $A$ and one immediately after B). Since $\pi_L$ and $\pi_R$ are cyclic permutations, this is equivalent to each set $C_i$ appearing once after $A$. The sequence of indices for these sets is $(1,\pi_L\pi_R(1),(\pi_L\pi_R)^2(1),\ldots,(\pi_L\pi_R)^{-1}(1))$. This sequence contains all the indices $1,\ldots,k$ exactly once \emph{iff} $\pi_L\pi_R$ is a cyclic permutation. Finally, $\pi_R\pi_L$ is cyclic \emph{iff} $\pi_L\pi_R$ is cyclic (they are conjugate permutations), which completes the proof.
\end{proof}

Given three Lorenz braids $A$, $B$ and $C$, all of them closing to knots, let $\pi_R(A)$ be the $R$-permutation of $A$ and $\pi_L(B)$ be the $L$-permutation of $B$ as in Def. \ref{def:LRperm}. Assume that $\pi_R(A) \pi_L(B)$ is cyclic.

Let $m_R(i)$ be the number of $R$ symbols between the $i$-th and the $(i+1)$-th $L$ symbols in $B$ ($m_R(i)=0$ whenever there are two consecutive $L$ symbols), for $i=1,\dots,k-1$, and $m_R(k)$ the number of $R$ symbols after the last $L$. Likewise, let $m_L(i)$ the number of $L$ symbols between the $i$-th and the $(i+1)$-th $R$ symbols in $A$ ($m_L(i)=0$ if the $i$-th $R$ is immediately followed by another $R$), for $i=1,\dots,k-1$, $m_L(k)$ the number of $L$ symbols after the last $R$. For simplicity, we will write $\pi_R$ for $\pi_R(A)$ and $\pi_L$ for $\pi_L(B)$.

The following algorithm returns the aperiodic word $w(A,B,C)$ corresponding to the satellite braid constructed through the procedure above:

\begin{algorithm}\label{alg:satellite}

  \begin{itemize}
  \item[] 
  \item[] Input: Three aperiodic words $w_A$, $w_B$, $w_C$ with $n_R(A)=n_L(B)=k$, $w_A$ in $R$-minimal form, $w_B$ and $w_C$ in $L$-maximal form.
  
  \end{itemize}

\begin{enumerate}
\item For $i=1,\ldots,k$, $w_i=w_C$.
\item
  \begin{itemize}
  \item[] $j_1=k$.
  \item[] Insert  $R^{m_R(j_1)}$ in $w_1$ immediately after the maximal position;
  \item[] $j_2=\pi_R(j_1)$;
  \item[] Insert $L^{m_L(j_2)}$ in $w_1$ immediately after the minimal position.
\end{itemize}

\item For $i=2,\ldots,k$:
  \begin{itemize}
  \item[] $j_{2i-1}=\pi_L(j_{2i-2})$;
  \item[] Insert $R^{m_R(j_{2i-1})}$ in $w_i$ immediately after the maximal position of $w_i$;
  \item[] $j_{2i}=\pi_R(j_{2i-1})$;
  \item[] Insert $L^{m_L(j_{2i})}$ in $w_i$ immediately after the minimal position of $w_i$.
  \end{itemize}

\item $w_S=w_1w_2 \ldots w_k$

\item[] Output: Aperiodic word $w_S(A,B,C)$ of satellite in $L$-maximal form.

\end{enumerate}

\end{algorithm}

\begin{prop}\label{prop:satword}
  If the satellite braid constructed as above from Lorenz braids $A$, $B$ and $C$ represents a knot, then its aperiodic word is obtained by the preceding algorithm.
\end{prop}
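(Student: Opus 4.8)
The plan is to read the aperiodic word of the satellite directly off its Lorenz braid $S$. Recall that, when a Lorenz braid closes to a knot, its word is recovered up to a shift by following the orbit of the braid permutation around the braid closure and recording $L$ each time the orbit traverses an $L$-strand and $R$ each time it traverses an $R$-strand. So what has to be shown is that this record, read off $S$, equals $w_1w_2\cdots w_k$ up to a shift.

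First I would fix notation for the strands of $S$, reusing the decomposition from the proof of the preceding proposition into the set $A$ (the $n_L(A)$ left strands of the pattern braid $A$ together with the $k$ inflated-companion strands on which $A$ acts), the symmetric set $B$, and the sets $C_1,\dots,C_k$ formed by the remaining strands of the $k$ parallel copies of $C$. Each $C_i$ contains one strand $a_i$ that $A$ uses as an $R$-strand and one strand $b_i$ that $B$ uses as an $L$-strand. Using the inflated-braid picture (Figs.~\ref{fig:satbraid2} and~\ref{fig:satbraid1}) I would describe $\pi_S$ restricted to each of these sets: on $C_i$ it is the cyclic permutation of $C$ with the two strands $a_i,b_i$ removed; on $A$ and on $B$ it agrees with the permutations of $A$ and $B$. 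I would also determine the $L/R$ type of every strand in $S$ and, crucially, which strand $a_i$ of each $C_i$ is captured by $A$ and which strand $b_i$ is captured by $B$; the content here is that $a_i$ is the strand carrying the maximal position of $w_C$ and $b_i$ the strand carrying the $R$-minimal position, and that the $L$-strands of $A$ falling between two consecutive $a_i$'s (in the order induced by $w_A$) number exactly $m_L(\cdot)$, with the symmetric statement for the $R$-strands of $B$ and $m_R(\cdot)$.

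With this in hand, I would trace the orbit through one period of the macro-structure already established in the preceding proposition, the sequence of sets $A,C_{j_1},B,C_{j_2},A,C_{j_3},\dots$, where $j_1=k$ and the indices evolve by $\pi_R$ and $\pi_L$ exactly as in Algorithm~\ref{alg:satellite}. The visit to a set $C_j$ reads a shift of $w_C$ cut into two arcs by the missing strands $a_j$ and $b_j$; the excursion through $B$ occurring between the two arcs splices in the block $R^{m_R(j_{2i-1})}$ immediately after the maximal position of $w_C$, and the excursion through $A$ splices in $L^{m_L(j_{2i})}$ immediately after the $R$-minimal position. This is precisely the word $w_i$ constructed by the algorithm, so concatenating over the $k$ periods yields $w_S=w_1\cdots w_k$ (replaced by its $L$-maximal shift if one wants the normalized output), and the degenerate cases $n_L(A)=0$ or $n_R(B)=0$ are the obvious specializations. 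The step I expect to be the main obstacle is exactly the one flagged above: proving that $A$ captures the maximal-position strand of each $C_i$ and $B$ the $R$-minimal-position strand, and that the pattern braids interleave their remaining strands with the $a_i$'s and $b_i$'s in the order dictated by $w_A$ and $w_B$. This forces one to use the detailed geometry of the inflation together with the extremal description of the distinguished shifts (the $L$-maximal shift sits at the rightmost $L$-strand, the $R$-minimal at the leftmost $R$-strand), keeping careful track of the order in which the inflated-companion braid and the pattern braids are concatenated.
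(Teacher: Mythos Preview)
Your plan is the same as the paper's: follow the orbit of the satellite permutation, decompose it into the visits to $A$, $B$, and the copies $C_i$, and read off the word piece by piece. The paper carries this out tersely by starting at the $L$-maximal position of the rightmost inflated copy and tracing through $B$, then back through the inflated $C$, then through $A$, and so on, matching each stretch to the insertions in the algorithm.

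There is one slip in your identification of the distinguished strands. You claim that $a_i$, the strand $A$ uses as an $R$-strand, carries the $L$-maximal position of $w_C$, and that $b_i$, the strand $B$ uses as an $L$-strand, carries the $R$-minimal position. It is the other way round. In the Lorenz braid of $C$ the $L$-maximal strand (position $n_L(C)$) has endpoint $n(C)$, because $s(L\text{-maximal})=R\text{-maximal}$; hence in the satellite the inflated $L$-maximal strands land at the rightmost endpoints of the inflated block and feed into $B$'s $L$-strands. Symmetrically, the $R$-minimal strand (position $n_L(C)+1$) has endpoint $1$, since $s(R\text{-minimal})=L\text{-minimal}$, and so feeds into $A$'s $R$-strands. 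This is also what your own geometric hint (``the $L$-maximal shift sits at the rightmost $L$-strand, the $R$-minimal at the leftmost $R$-strand'') points to, and your ``excursion'' paragraph in fact has it correctly: the $R$-block from $B$ is spliced after the maximal position and the $L$-block from $A$ after the $R$-minimal position. So the inconsistency is only in the earlier labeling of $a_i$ and $b_i$; once swapped, the obstacle you flag is exactly the step the paper handles, and your argument goes through along the same lines as the paper's.
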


\begin{proof}
  First note that the satellite braid has exactly $k\, n(C) + n_L(A) +n_R(B)$ strings, and its corresponding word $k\, t(C)$ syllables, (where $n(C)$ is the number of strings in $C$ and $t(C)$ is its trip number). Each string in $C$ originates $k$ strings in the satellite. To these, $n_L(A) +n_R(B)$ strings are added. The $LR$ strings in the satellite braid result from the inflation of the $LR$ strings in $C$. The $n_L(A)$ $L$ strings of $A$ and the $n_R(B)$ $R$ strings of $B$ contribute, respectively, with $n_L(A)$ $LL$ strings and $n_R(B)$ $RR$ strings. Therefore, the braid index of the satellite knot or link is $k\, t(C)$.

With the exception of the strings that result from the inflation of the first $RL$ string and the last $LR$ string in $C$ (connected, respectively, to the $R$ strings of $A$ and the  all strings in the inflated braid) are organized in sets of beams of $k$ parallel strings whose endpoints are the startpoints of another beam of $k$ parallel strings. The subwords in the word of the satellite braid will be the same as in the word for $C$ while the startpoints and endpoints of strings are within the inflated braid.

If we follow the satellite braid beginning with the maximal position of the last $LR$ beam, corresponding to an $L$ symbol both in $C$ and in the satellite word, then its endpoint will be the last in $B$, corresponding to the endpoint of the last $LR$ string of $B$. The $L$ symbol will thus be followed by $m_R(k)$ $R$ symbols, corresponding to $RR$ strings in $B$. When an $RL$ string of $B$ is reached, the endpoint is the  $R$ point in the last $RL$ beam of the satellite braid given by $j_2=\pi_L(k)$. Since the strings in the inflated braid are parallel, the next symbols in the satellite braid are the sequence between the maxima and minimal position of $C$. When a position in the first $RL$ braid is reached, $m_L(j_2)$ $L$ symbols are added in an analogous way and the inflated braid is resumed at the $\pi_R(j_2)$ and then continued by the symbols in $C$ until the the $\pi_R(j_2)$ position in the last beam. The process continues until all the points on the satellite braid have been visited once.
\end{proof}

\begin{example}
  For the Lorenz braid in Fig. \ref{fig:satbraid2}, $A=LRRLR$, $B=LRRLRLR$ and $C=LRRLR$ and $k=n_R(A)=n_L(B)=3$.

  In braid $B$, $l_1=LRLRLRR$, $l_2=LRLRRLR$, $l_3=LRRLRLR$, so $m_R(1)=1$, $m_R(2)=1$, $m_R(3)=2$ and, in cycle notation, $\pi_L=(1\,2\,3)$.

In braid $A$, $r_1=RLRLR$, $r_2=RLRRL$, $r_3=RRLRL$, $m_L(1)=1$, $m_L(2)=1$, $m_L(3)=0$ and $\pi_R=(1\,2\,3)$.

Starting with $w_1=w_2=w_3=LRRLR$ and following the steps above,
\begin{itemize}
\item add $m_R(3)=2$ $R$ symbols after the maximal position of $w_1$;
\item $\pi_R(3)=1$, so add $m_L(1)=1$ $L$ symbol to the minimal position of $w_1$;
\item $\pi_L(1)=2$, so add $m_R(2)=1$ $R$ to the maximal position of $w_2$;
\item $\pi_R(2)=3$, so add $m_L(3)=0$ $L$ to the minimal position of $w_2$;
\item $\pi_L(3)=1$, so add $m_R(1)=1$ $R$ at the maximal position of $w_3$;
\item $\pi_R(1)=2$, so add $m_L(2)=1$ $L$ to the minimal position of $w^3$.
\end{itemize}
 The resulting aperiodic word $w_S=w_1w_2w_3=LRRRRLLRLRRRLRLRRRLLR$ corresponds to the satellite braid in Fig. \ref{fig:satbraid2}.
\end{example}

\begin{theorem}\label{theor:permnotsat}
  No permuted words in the sets $P(p,q)$ can be obtained through the satellite braid construction defined above.
\end{theorem}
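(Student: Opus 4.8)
The plan is to argue by contradiction. Suppose some $w\in P(p,q)$ (so $p>4$, $q=kp+r$, $1<r<p-1$) is obtained from the satellite construction applied to Lorenz braids $A$, $B$, $C$ with $n_R(A)=n_L(B)=\kappa$; by Proposition~\ref{prop:satword} the word $w=w_S(A,B,C)$ is then the output of Algorithm~\ref{alg:satellite}. I would exploit two rigid features of words in $P(p,q)$: first, $w$ has no two consecutive $L$'s, so its trip number, which equals its braid index, is exactly $n_L(w)=p$; second, every syllable of $w$ is $LR^k$ or $LR^{k+1}$, i.e.\ all syllable $R$-exponents lie in the set $\{k,k+1\}$, which has length one.

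The first step is to pin down $A$. The satellite has braid index $\kappa\,t(C)$ (Proposition~\ref{prop:satword}), which must equal $p$; on the other hand, counting $L$'s through the algorithm gives $p=n_L(w)=\kappa\,n_L(C)+n_L(A)$. Subtracting and using the elementary bound $t(C)\le n_L(C)$ (each syllable contains at least one $L$) yields $n_L(A)=\kappa\bigl(t(C)-n_L(C)\bigr)\le 0$, hence $n_L(A)=0$ and $t(C)=n_L(C)$. Thus the left pattern braid $A$ is trivial, the companion word $w_C$ has no two consecutive $L$'s, and, since the quantities $m_L(j)$ sum to $n_L(A)=0$, the algorithm inserts no $L$-blocks at all. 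With all $m_L(j)=0$, Algorithm~\ref{alg:satellite} produces $w_S=w_1w_2\cdots w_\kappa$, where each $w_i$ is a copy of $w_C$ with a single block $R^{m_R(j)}$ inserted immediately after its maximal position, the $\kappa$ blocks distributing the $n_R(B)$ inserted $R$'s one per $w_i$.

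Next I would describe the syllables of $w_S$ precisely. Write the syllables of $w_C$ as $LR^{b_1},\dots,LR^{b_\tau}$. The maximal position of $w_C$ is the start of a syllable of largest exponent $\beta=\max_i b_i$: among the $\tau$ shifts of $w_C$ beginning with $L$, a strictly larger leading $R$-run gives a strictly lexicographically larger sequence, because after the common prefix the shorter $R$-run is followed by an $L$ while the longer one still shows an $R$ there, and $L<R$. Hence inserting $R^{m_R(j)}$ at that position replaces that syllable by $LR^{\beta+m_R(j)}$ and leaves the other syllables of $w_C$ unchanged; and because each $w_i$ ends in $R$'s (otherwise $w_C$ would contain an $L$ cyclically followed by an $L$) while $w_{i+1}$ begins with $L$, no syllables merge at the junctions. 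Therefore the multiset of syllable $R$-exponents of $w_S$ consists of $\kappa$ copies of $\{b_1,\dots,b_\tau\}$ in which $\kappa$ of the entries equal to $\beta$ have been replaced by $\beta+m_R(\cdot)$.

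To close the argument: since the companion $C$ is a nontrivial knot, $t(C)=\tau\ge2$; as $w_C$ is aperiodic with $\tau\ge2$ syllables, the $b_i$ are not all equal, so $\beta':=\min_i b_i<\beta$. Each $w_i$ alters only a $\beta$-syllable, so every $\beta'$-syllable of $w_C$ survives and the exponent $\beta'$ occurs in $w_S$. If $n_R(B)=0$ then nothing is inserted and $w_S=(w_C)^{\kappa}$ would be periodic, contradicting aperiodicity of $w$ (and with $A$ also trivial there is no pattern, so the construction produces no genuine satellite); hence $n_R(B)\ge1$, some $m_R(j)\ge1$, and $w_S$ has a syllable of exponent $\beta+m_R(j)\ge\beta+1$. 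So $w_S$ carries syllable exponents $\beta'$ and at least $\beta+1$, which differ by at least $2$ — impossible for a word whose syllable exponents all lie in $\{k,k+1\}$. This contradiction shows that no word of $P(p,q)$ arises from the satellite braid construction. I expect the main obstacle to be the bookkeeping in the first step (reading off $n_L(A)=0$, hence that all $L$-insertions vanish, from the braid-index and $L$-count equalities) together with the check that the $R$-insertion really lands inside a maximal-exponent syllable of $w_C$ and that concatenating the $w_i$ creates no syllable merges; once those are secured, the exponent-range contradiction is immediate.
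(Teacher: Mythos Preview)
Your proof is correct and follows essentially the same route as the paper: force the left pattern $A$ to be trivial so that only $R$-blocks are inserted into copies of $w_C$, then use that $w_C$ has at least two distinct syllable exponents while the insertion strictly increases a maximal one, producing an exponent spread $\ge 2$ incompatible with $\{k,k+1\}$. Your version is in fact more careful than the paper's—you justify $n_L(A)=0$ via the braid-index identity $p=\kappa\,t(C)$ rather than asserting it, you explicitly rule out $n_R(B)=0$ by aperiodicity, and you check that concatenating the $w_i$ creates no syllable merges.
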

\begin{proof}
  As the words in $P(p,q)$ have no $LL$ subwords, they would have to be obtained, by the procedure above, from the word of a companion knot $C$ which equally has no $LL$ subsequences, taking $A$ as an identity braid and a Lorenz braid $B$. The word for $C$ would have the structure (in $L-$maximal form) $LR^{b_1}\dots LR^{b_a}$ where $a=n_L(C)>1$ (otherwise $C$ would be trivial), $b_1=\max\{b_i\}$ and at least one $k$ satisfying $b_k < b_1$ (otherwise the word would be periodic). Each word in $P(p,q)$ has syllables of only two types: $LR^{k+1}$ and $LR^k$. The satellite construction, in this case, only adds $R$ symbols to the maximal positions of $w_1,\ldots,w_k$, so if it could be obtained through this procedure then we would have $\min\{b_i\}=k$ and $b_1=k+1$. But after the satellite construction, the resulting word would have a first syllable with at least $b_1+1=k+2$ symbols $R$, and at least one syllable with $k$ symbols $R$ and thus will not be in any $P(p,q)$.
\end{proof}

El-Rifai showed \cite{Elrifai99} that a Lorenz knot that is a satellite of a Lorenz knot can be presented as the closure of a Lorenz braid constructed as outlined above. Morton has conjectured \cite{Elrifai88},\cite{Dehornoy11} that all Lorenz satellite knots are cablings (satellites where the pattern is a torus knot) on Lorenz knots.

If Morton's conjecture is true, then we conclude from Thm.\ref{theor:permnotsat} that Lorenz knots corresponding to syllable permutations of standard torus words, that is, the knots corresponding to words in the sets $P(p,q)$ are not satellites.
  
\section{Syllable permutations that do not correspond to torus knots}
\label{sec:Lorenztor}

As seen above, the braid index of any braid corresponding to a word in $P(p,q)$ is given by $t=n_L=p$. So, if any ot these knots were torus knots, then they would have to be of type $T(p,q')$ for some $q'$.

As a consequence of remarks \ref{rem:p>4} and \ref{rem:r=1,p-1} in Sec. \ref{sec:torsylperm}, we again assume $p>4$, $1<r<p-1$ below.

\begin{lem}\label{lem:permnotorus1}
  No closure of a braid corresponding to a word in $P(p,q)$ is a torus knot $T(p,q')$, for $q'>q$, and the only word in $P(p,q)$ corresponding to a braid that has $T(p,q)$ as closure is the evenly distributed word $w(p,q)$ corresponding to the standard $T(p,q)$ braid.
\end{lem}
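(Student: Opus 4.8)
The plan is to exploit the genus bound in Eq.~\eqref{eq:genus} together with the fact that the genus is a knot invariant. A torus knot $T(p,q')$ has genus $g(T(p,q')) = \frac{(p-1)(q'-1)}{2}$, so $2g = (p-1)(q'-1)$. If a word $w \in P(p,q)$ closed to a braid representing $T(p,q')$, then its genus would have to equal $\frac{(p-1)(q'-1)}{2}$. But Eq.~\eqref{eq:genus} tells us that for every $w \in P(p,q)$ we have $2g(w) \le (p-1)(q-1)$, with equality \emph{only} for the standard word $w(p,q)$ (since the maximal number of crossings $pq$ is attained only by the braid in which every $L$ string crosses every $R$ string). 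Hence $(p-1)(q'-1) \le (p-1)(q-1)$, which forces $q' \le q$. This immediately rules out $q' > q$, and in the boundary case $q' = q$ it forces $2g(w) = (p-1)(q-1)$, so $w$ must be the standard word $w(p,q)$.

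\textbf{Carrying this out,} I would proceed in the following steps. First, recall that all knots coded by words in $P(p,q)$ have braid index (= trip number) equal to $p$, by Lemma~1 and the observation that these words have no $LL$ subword; so the only torus knots they could possibly be are of the form $T(p,q')$, and moreover we may take $p < q'$ since $\gcd(p,q')=1$ and $p$ is the braid index (the braid index of $T(a,b)$ with $a<b$ is $a$). Second, invoke that the closure of a positive braid has genus given by Eq.~\eqref{eq:1}, $2g = c - n + 1$, where for our braids $n = p + q$ (the number of strings in the Lorenz braid, \emph{not} the minimal braid) and $c$ is the crossing-point count; combine this with the crossing bound~\eqref{crossings} to get exactly~\eqref{eq:genus}. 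Third, note that $g$ is a knot invariant, so if two of our knots coincide they have the same $g$; compare with the torus knot genus formula. Fourth, derive $q' \le q$ and dispose of the strict case. Fifth, for $q' = q$, use the fact (already stated just after Eq.~\eqref{eq:genus}, and underlying Lemma~\ref{order}) that the \emph{unique} word in $P(p,q)$ achieving the maximal crossing number $pq$ — equivalently the maximal genus — is the evenly distributed standard word $w(p,q)$; hence the only candidate is $w(p,q)$ itself, and it does represent $T(p,q)$, which is consistent.

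\textbf{The main obstacle} I anticipate is not the genus inequality itself but the bookkeeping needed to be sure that the genus computed from Eq.~\eqref{eq:1} applied to the \emph{Lorenz} braid (on $p+q$ strings) is legitimately the genus of the knot. Equation~\eqref{eq:1} requires a positive braid whose closure is the knot, and its Seifert surface (from Seifert's algorithm / Bennequin) is of minimal genus precisely because the braid is positive; Lorenz braids are positive, so this is fine, but one should state it cleanly: the Birman--Williams positivity of Lorenz braids gives $2g = c - n + 1$ with $(c,n)$ read off the Lorenz braid, and this is independent of which positive braid representative we use (all give the same $g$ because $g$ is an invariant). A secondary subtlety is confirming that equality in the upper bound of~\eqref{crossings}, hence of~\eqref{eq:genus}, is attained \emph{only} by $w(p,q)$: this is the content of the remark following~\eqref{crossings} (that $c = pq$ forces every $L$ string to cross every $R$ string, which characterizes the evenly distributed word), and I would simply cite it rather than re-prove it. Everything else is arithmetic.
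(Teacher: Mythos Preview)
Your proposal is correct and follows essentially the same route as the paper: use the upper bound in Eq.~\eqref{eq:genus} together with the torus-knot genus formula $2g(T(p,q'))=(p-1)(q'-1)$ to force $q'\le q$, and then invoke uniqueness of the maximizer to handle $q'=q$. Your write-up is in fact more careful than the paper's (you justify the braid-index constraint and the applicability of Eq.~\eqref{eq:1} to the Lorenz braid), but the key idea is identical.
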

\begin{proof}
  If a word in $P(p,q)$ corresponds to a knot $T(p,q')$, then the genus of $T(p,q')$ must be in the range defined by Eq. \ref{eq:genus}, so in particular, $(p-1)(q'-1) \leq (p-1)(q-1) \Rightarrow q' \leq q$. The only word in $P(p,q)$ for which the genus is $(p-1)(q-1)$ is the word corresponding to the standard braid of $T(p,q)$.
\end{proof}

\begin{theorem}\label{theor:unique}
  For each word $w$ in $P(p,q)$, $4<p<q$, distinct from $w(p,q)$, there is at most one torus knot $T(p,q'),\ q'<q$, with the same braid index and genus as the closure of the braid corresponding to $w$.
\end{theorem}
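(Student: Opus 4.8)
The plan is to show that fixing the braid index and the genus of the Lorenz knot coded by $w$ pins down the companion parameter $q'$. Throughout, write $K_w$ for the closure of the Lorenz braid associated to $w\in P(p,q)$. As already observed, $w$ contains no $LL$ subword, so its trip number, and hence the braid index of $K_w$, equals $n_L=p$. On the other hand, a torus knot $T(a,b)$ with $\gcd(a,b)=1$ and $2\le a<b$ has braid index $a$, and $T(a,b)=T(b,a)$; so any torus knot whose braid index equals that of $K_w$ must be of the form $T(p,q')$ with $q'>p$ and $\gcd(p,q')=1$ (there is no degeneracy here because $p>4$). Thus the torus knots competing with $K_w$ already form the one-parameter family $\{T(p,q'):q'>p,\ \gcd(p,q')=1\}$, and it only remains to see that the genus condition admits at most one of them.

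For that I would use that $K_w$ is the closure of a positive braid --- the Lorenz braid of $w$, or equivalently its minimal Birman--Williams braid --- so by Eq.~\ref{eq:1} its genus is a quantity computable from $w$; by Eq.~\ref{eq:genus} it satisfies $kp(p-1)+r(r-1)\le 2g(K_w)\le (p-1)(q-1)$, the upper equality being attained only by $w(p,q)$. The genus of $T(p,q')$ is the classical value $\tfrac{(p-1)(q'-1)}{2}$, which is strictly increasing in $q'$. Hence $g(T(p,q'))=g(K_w)$ is equivalent to $(p-1)(q'-1)=2g(K_w)$, i.e. to $q'=1+\tfrac{2g(K_w)}{p-1}$: this determines $q'$ uniquely when the right-hand side is an integer coprime to $p$, and otherwise there is no such torus knot at all --- which is exactly the ``at most one'' of the statement. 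Moreover, for $w\neq w(p,q)$ the bounds above give $0<2g(K_w)<(p-1)(q-1)$, so the candidate $q'$, when it exists, automatically lies in the range $p<q'<q$ required by the theorem (the strict upper inequality also re-derives Lemma~\ref{lem:permnotorus1} in this situation).

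In essence the proof is a single monotonicity observation: distinct torus knots $T(p,q')$ have distinct genera, so at most one of them can match $g(K_w)$. I do not expect a serious obstacle; the two places calling for care are (i) checking that the braid index alone already confines the possible match to the family $T(p,q')$, which uses the classical braid-index formula for torus knots, and (ii) legitimising the equation $(p-1)(q'-1)=2g(K_w)$ by noting that its right-hand side is a genuine, $w$-computable invariant --- which is precisely where the positivity of the Lorenz (or Birman--Williams) braid together with Eq.~\ref{eq:1} comes in. Thurston's theorem and the satellite analysis of Section~\ref{sec:Lorenzsat} are not needed here; they enter only afterwards, to combine this uniqueness with the exclusion results and deduce hyperbolicity.
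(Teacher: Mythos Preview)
Your argument is correct and follows essentially the same route as the paper: fix the braid index to force any candidate torus knot into the family $T(p,q')$, then observe that the genus formula $2g=(p-1)(q'-1)$ determines $q'$ uniquely (if at all). You add a little extra care---the coprimality of $q'$ with $p$ and the verification that the resulting $q'$ lands in $(p,q)$ via Eq.~\ref{eq:genus}---but the core monotonicity/divisibility observation is identical to the paper's proof.
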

\begin{proof}
  Let $g$ be the genus of the closure of the Lorenz braid corresponding to $w$. As $w \in P(p,q)$, any torus knot with the same braid index must be of type $T(p,q')$ for some $q'<q$. The genus of $T(p,q')$ is given by $(p-1)(q'-1)$ so $q'$ is uniquely determined by $(p-1)(q'-1)=2g$. If $p-1$ divides $2g$ then this equality determines a unique $q'$, otherwise there is no $q'$ satisfying the condition.
\end{proof}

\begin{lem}\label{lem:kp}
  Let $w \in P(p,q)=P(p,kp+r)$, $p>4$, $1<r<p-1$. If there is a (unique) torus knot $T(p,q')=T(p,k'p+r')$ with the same genus and braid index as the closure of the braid of $w$, then $k'=k$ and $1<r'<r<p-1$.
\end{lem}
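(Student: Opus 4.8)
The plan is to derive the statement purely from the genus bounds of Eq.~\ref{eq:genus} together with the standing hypotheses $p>4$ and $1<r<p-1$; nothing beyond these is needed. First note that $p<q$ forces $k\geq 1$, and that, writing $q'=k'p+r'$ by Euclidean division with $0\leq r'<p$, we have $r'\geq 1$ because $\gcd(p,q')=1$. Let $2g$ denote twice the genus of the closure of the Lorenz braid associated to $w$, which --- as throughout this section --- is taken distinct from $w(p,q)$. By hypothesis the candidate torus knot $T(p,q')$ has the same genus, so $2g=(p-1)(q'-1)$, while Eq.~\ref{eq:genus} gives
\[ kp(p-1)+r(r-1)\ \leq\ (p-1)(q'-1)\ \leq\ (p-1)(kp+r-1). \]
Dividing the right-hand inequality by $p-1>0$ yields $q'\leq kp+r$. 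Since $r\geq 2$ we have $r(r-1)\geq 2>0$, so the left-hand inequality gives $(p-1)(q'-1)>kp(p-1)$, hence $q'-1>kp$, and since $q'-1$ is an integer, $q'\geq kp+2$.

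Thus $kp+2\leq q'\leq kp+r$, and as $r<p$ this whole range lies strictly between $kp$ and $(k+1)p$; it therefore already exhibits the Euclidean division of $q'$ by $p$, forcing $k'=k$ and $r'=q'-kp$ with $2\leq r'\leq r$. In particular $k'=k$ and $1<r'$. It remains to improve $r'\leq r$ to $r'<r$. Suppose $r'=r$; then $q'=q$, so $2g=(p-1)(q-1)$. But $(p-1)(q-1)$ is exactly the maximal value of $2g$ over $P(p,q)$ in Eq.~\ref{eq:genus}, and this maximum is attained \emph{only} by the standard word $w(p,q)$ --- the same uniqueness invoked in the proof of Lemma~\ref{lem:permnotorus1}, which ultimately comes from the equality $c=pq$ in Eq.~\ref{crossings} holding precisely for the standard torus braid. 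Since $w\neq w(p,q)$ this is a contradiction, so $r'<r$; together with the hypothesis $r<p-1$ this gives $1<r'<r<p-1$.

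The whole argument is arithmetic once Eq.~\ref{eq:genus} is in hand, so I do not expect any serious obstacle; the one point that genuinely needs care is the final strict inequality $r'<r$, which is not a consequence of the \emph{inequalities} in Eq.~\ref{eq:genus} but of the stronger fact that their upper endpoint is realized by a \emph{unique} word, namely $w(p,q)$. The simultaneous bracketing $kp+2\leq q'\leq kp+r$, which at one stroke pins down $k'=k$ and $r'>1$, is routine once one observes that $r\geq 2$ makes the lower bound strictly exceed $kp(p-1)$ while $r<p$ keeps $q'$ inside a single residue block modulo $p$.
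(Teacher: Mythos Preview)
Your proof is correct and follows essentially the same approach as the paper's: both rest on the genus bounds of Eq.~\ref{eq:genus} together with the uniqueness of the upper endpoint (equivalently, Lemma~\ref{lem:permnotorus1}) to force $r'<r$. Your single sandwiching $kp+2\leq q'\leq kp+r$ neatly packages in one stroke what the paper obtains by separate case analyses ruling out $k'>k$, $k'<k$, and $r'=1$, but the underlying arithmetic is identical.
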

\begin{proof}
  If $k'>k$ then $k \geq k+1$, so $q'=k'p+r' \geq (k+1)p+r'=kp+p+r'>kp+r=q$. But $q'<q$ (Lemma \ref{lem:permnotorus1}) so $k'\leq k$.

From the first inequality in Eq. \ref{eq:genus},
$kp(p-1)+r(r-1) \leq (p-1)(k'p+r'-1)$ or $$\left((k-k')p-r'+1 \right)(p-1)+r(r-1)\leq 0.$$
If $k'<k$ then $(k-k')p \geq p$ so $(k-k')p -r'+1 \geq p-r'+1 \geq 0$ and the inequality above doesn't hold.

For $k=k'$, $q'<q \Rightarrow r'<r$. If $r'=1$ then the same inequality gives $r(r-1)<0$ (impossible) which completes the proof.

\end{proof}

\begin{prop}\label{prop:notorus1}
  For each odd integer $p>4$ and all integer $k>0$, the sets $P(p,q)$, for $q=kp+2$ and $q=(k+1)p-2$ contain no words corresponding to braids whose closure is a torus knot, besides $w(p,q)$.
\end{prop}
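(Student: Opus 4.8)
The plan is to combine Lemma~\ref{lem:permnotorus1}, Lemma~\ref{lem:kp}, and the genus bounds of Eq.~\ref{eq:genus}, reducing the statement to an arithmetic incompatibility. First I would invoke Lemma~\ref{lem:permnotorus1}: no word in $P(p,q)$ closes to $T(p,q')$ for $q'>q$, and $w(p,q)$ is the only word of $P(p,q)$ closing to $T(p,q)$. Hence it suffices to rule out the existence of a word $w\in P(p,q)$ with $w\neq w(p,q)$ whose closure is a torus knot $T(p,q')$ with $q'<q$; writing $q'=k'p+r'$ with $0<r'<p$, Lemma~\ref{lem:kp} already forces $k'=k$ and $1<r'<r<p-1$.

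For $q=kp+2$ (the case $r=2$) this is immediate: Lemma~\ref{lem:kp} requires $1<r'<2$, which is impossible for an integer, so no such $w$ exists. For $q=(k+1)p-2=kp+(p-2)$ (the case $r=p-2$) I would bring in the genus. Let $g$ be the genus of the closure of the braid of $w$. Since $w\in P(p,q)$, the left inequality of Eq.~\ref{eq:genus} gives $2g\ge kp(p-1)+r(r-1)=kp(p-1)+(p-2)(p-3)$. If that closure is $T(p,kp+r')$, its genus is $\tfrac12(p-1)(kp+r'-1)$, so $(p-1)(kp+r'-1)\ge kp(p-1)+(p-2)(p-3)$, i.e. $(p-1)(r'-1)\ge(p-2)(p-3)=(p-1)(p-4)+2$, whence $r'-1\ge p-4+\tfrac{2}{p-1}$. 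As $p>4$ we have $0<\tfrac{2}{p-1}<1$, and since $r'$ is an integer this forces $r'\ge p-2=r$, contradicting $r'<r$. So again no such $w$ exists, which proves the proposition.

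I do not expect a genuine obstacle: the argument is a bookkeeping combination of results already established. The one point worth flagging is that the genus lower bound in Eq.~\ref{eq:genus} is applied to an \emph{arbitrary} word of $P(p,q)$, not only to the extremal word $(LR^{k+1})^r(LR^k)^{p-r}$ of Lemma~\ref{order}, and it is precisely this uniform bound that closes the interval $1<r'<r$ left open by Lemma~\ref{lem:kp} when $r=p-2$. The other thing to keep track of is why $p$ must be odd: it is exactly the condition ensuring $\gcd(p,kp+2)=\gcd(p,(k+1)p-2)=1$, so that the torus knots --- and hence the sets $P(p,q)$ --- in the statement are well defined.
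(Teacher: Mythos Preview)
Your argument is correct and follows essentially the same route as the paper: coprimality from $p$ odd, the case $r=2$ dispatched by the constraint $1<r'<r$ of Lemma~\ref{lem:kp}, and the case $r=p-2$ handled by showing the genus lower bound in Eq.~\ref{eq:genus} is incompatible with any $r'<r$. The only cosmetic difference is that the paper phrases the last step as checking that the interval $[\,kp(p-1)+r(r-1),\,(p-1)(q-1)\,]$ contains no multiple of $p-1$ other than its right endpoint, whereas you solve directly for $r'$ and obtain $r'\ge p-2$; the underlying inequality $(p-2)(p-3)>(p-1)(p-4)$ is the same in both.
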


\begin{proof}
  For all odd positive integers $p$ and all $k>0$, $(p,kp+2)$ and $(p,(k+1)p-2)$ are pairs of relatively prime integers. If $q=kp+2$, then $r=2$ and there is no $r'$ satisfying the condition of Lem. \ref{lem:kp}.
  For $q=(k+1)p-2$, from the proof of Theorem \ref{theor:unique}, if there is a word in $P(p,q)$ corresponding to a braid whose closure is a torus knot of genus $g$, then $(p-1)$ divides $2g$. As the maximum $2g$ in $P(p,q)$ is $(p-1)(q-1)$, there will be at least another (even) multiple of $(p-1)$ in the set $\{kp(p-1)+r(r-1),\ldots,(p-1)(kp+r-1)\}$ if $kp(p-1)+r(r-1) \leq (p-1)(kp+r-2)$ or
$$ r(r-1) \leq (p-1)(r-2).$$
This inequality above does not hold, with any $p>4$, for $r=p-2$, as can be seen from simple substitution, which completes the proof.
\end{proof}

\begin{prop}\label{prop:notorus2}
  If $p>4$ is even and not a multiple of $3$, then for any integer $k$ the sets $P(p,kp+3)$ and $P(p,(k+1)p-3)$ contain no words corresponding to braids whose closure is a torus knot. Also, if $p<12$ and $p$ is even, then $P(p,q)$ contains no words other than $w(p,q)$ corresponding to torus knots.
\end{prop}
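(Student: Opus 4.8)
The plan is to treat the two families $q=kp+3$ (so $r=3$) and $q=(k+1)p-3$ (so $r=p-3$) separately, paralleling the proof of Proposition \ref{prop:notorus1}. First I would record that, since $p$ is even and not a multiple of $3$, both $\gcd(p,kp+3)=\gcd(p,3)=1$ and $\gcd(p,(k+1)p-3)=\gcd(p,3)=1$, so $P(p,q)$ is defined in each case; and since every word in $P(p,q)$ has trip number $n_L=p$ and hence braid index $p$, any element of $P(p,q)$ closing to a torus knot must close to some $T(p,q')$.

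For $q=kp+3$ I would invoke Lemma \ref{lem:kp}: if a word $w\in P(p,q)$ with $w\neq w(p,q)$ closed to a torus knot $T(p,q')=T(p,k'p+r')$, then $k'=k$ and $1<r'<r=3$, forcing $r'=2$ and $q'=kp+2$. But $p$ is even, so $\gcd(p,kp+2)=2>1$ and $T(p,kp+2)$ is not a knot (equivalently $2g=(p-1)(q'-1)=(p-1)(kp+1)$ would be odd, which is impossible for a genus). Hence $w(p,q)$ is the only word in $P(p,q)$ closing to a torus knot.

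For $q=(k+1)p-3$ I would run the divisibility argument from the proof of Theorem \ref{theor:unique}: a word $w\in P(p,q)$, $w\neq w(p,q)$, closing to $T(p,q')$ with $q'<q$ (Lemma \ref{lem:permnotorus1}) has genus $g$ with $2g=(p-1)(q'-1)$, a value constrained by Eq. \ref{eq:genus}, so $p-1$ divides $2g$. The new ingredient compared with Proposition \ref{prop:notorus1} is that $p-1$ is now odd, so $(p-1)\mid 2g$ together with $2g$ even forces $2(p-1)\mid 2g$; equivalently $q'$ is odd, and since $q=(k+1)p-3$ is also odd and $q'<q$ one gets $q'\leq q-2$, hence $2g=(p-1)(q'-1)\leq (p-1)(q-3)$. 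On the other hand Eq. \ref{eq:genus} with $r=p-3$ gives $2g\geq kp(p-1)+(p-3)(p-4)$. Writing $(p-1)(q-3)=kp(p-1)+(p-1)(p-6)$, the two bounds force $(p-3)(p-4)\leq(p-1)(p-6)$, i.e. $12\leq 6$ --- a contradiction; so $P(p,q)$ again contains no torus-knot word besides $w(p,q)$.

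For the last assertion: by Remark \ref{rem:p>4} the claim is trivial when $p\leq 4$, and by Remark \ref{rem:r=1,p-1} it is trivial when $r=1$ or $r=p-1$, so for even $p<12$ it remains to consider $p\in\{6,8,10\}$ with $1<r<p-1$ and, for a torus knot to exist at all, $\gcd(p,r)=1$. There is no such $r$ for $p=6$; for $p=8$ the only possibilities are $r=3$ and $r=5=p-3$; for $p=10$ they are $r=3$ and $r=7=p-3$. Since $8$ and $10$ are even and not divisible by $3$, all of these are covered by the first part, which finishes the proof. The step I expect to need the most care is the parity observation in the $r=p-3$ case: without the input that $2g$ is even, divisibility by $p-1$ alone only excludes the multiple $(p-1)(q-2)$, and the inequality $kp(p-1)+(p-3)(p-4)\leq(p-1)(q-2)$ actually holds for all $p\geq 7$, so the argument would collapse --- it is precisely the passage to $(p-1)(q-3)$ permitted by parity that makes the numerics close.
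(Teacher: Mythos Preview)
Your proof is correct and rests on the same key observation as the paper's: since $p$ is even, $p-1$ is odd, so the requirement that $2g=(p-1)(q'-1)$ be even forces $q'-1$ to be even, and hence the next admissible value below $(p-1)(q-1)$ is $(p-1)(q-3)$ rather than $(p-1)(q-2)$. Your final numerical check $(p-3)(p-4)\leq(p-1)(p-6)$, i.e.\ $12\leq 6$, is exactly the contradiction the paper obtains.

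The differences are organisational. The paper runs a single uniform argument for both $r=3$ and $r=p-3$ (and indeed for the $p<12$ claim): it observes that another torus-knot candidate exists only if $kp(p-1)+r(r-1)\leq(p-1)(kp+r-3)$, i.e.\ $r(r-1)\leq(p-1)(r-3)$, and checks by substitution that this fails at $r=3$, at $r=p-3$, and for every $1<r<p-1$ when $p<12$ is even. You instead handle $r=3$ separately via Lemma \ref{lem:kp} (forcing $r'=2$ and then killing $T(p,kp+2)$ by parity of $2g$), and you handle the $p<12$ claim by enumerating $p\in\{6,8,10\}$, eliminating the residues with $\gcd(p,r)>1$, and reducing the survivors to the $r=3$ and $r=p-3$ cases already done. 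Both routes are clean; the paper's is slightly more uniform, while yours makes the role of Lemma \ref{lem:kp} in the $r=3$ case more transparent and avoids having to verify an inequality over a range of $r$.
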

\begin{proof}
  If $p$ is even and not a multiple of $3$ then $(p,kp+3)$ and $(p,(k+1)p-3)$ are pairs of relatively prime integers. Also, since $p-1$ is odd, $(p-1)(kp+r-1)$ is an even multiple of $p-1$ (because it is equal to $2g$ for the torus knot $T(p,kp+r)$, so $(p-1)(kp+r-2)$ is odd. There will be at least another even multiple of $(p-1)$ in the set $\{kp(p-1)+r(r-1),\ldots,(p-1)(kp+r-1)\}$ if $kp(p-1)+r(r-1) \leq (p-1)(kp+r-3)$. Direct substitution  shows this inequality does not hold for $r=3$ and $r=p-3$ or, for any $1<r<p-1$, for $p<12$ even.
\end{proof}
\begin{rem}
  If $p$ is even, then $(p,kp+2)$ and$(p,(k+1)p-2)$ are not pairs of relatively prime integers. The same happens with $(p,kp+3)$ and $(p,(k+1)p-3)$ if $p$ is a multiple of $3$.
\end{rem}

\section{Conclusion}

\label{sec:concl}
\begin{itemize}
\item We have defined sets $P(p,q)$ of words resulting from permuting the syllables of the standard words for torus knots $T(p,q), 1<p<q$ and characterized their corresponding Lorenz braids. For $p \leq 4$ $P(p,q)=\{w(p,q)\}$. We have shown that for each word $w$ in the sets $P(p,q)$ there is at most a torus knot $T(p,q')$ with the same braid index and genus as the knot coresponding to $w$. 
\item A procedure to obtain Lorenz satellite braids was defined, and an algorithm for finding its aperiodic word was devised. We have used this algorithm to prove that, provided Morton's conjecture is true, no set $P(p,q)$ contains words corresponding to satellite knots.
\item From Theorem \ref{theor:permnotsat}, Propositions \ref{prop:notorus1} and \ref{prop:notorus2} and Thurston's theorem \cite{Thurston82} we can finally state that:

If Morton's conjecture is true, then the words in sets $P(p,kp+2)$, $P(p,(k+1)p-2)$ for $p$ odd and $P(p,kp+3)$, $P((k+1)p-3)$, for $p$ even and $p$ not a multiple of $3$, distinct from the standard $w(p,q)$ torus word, correspond to hyperbolic Lorenz knots.
\item We have recently performed an extensive computational test \cite{Gomes14}, in which we computed the volumes of all knot complements corresponding to words in the (non-empty) sets $P(p,q)$ with $5\leq p \leq 19$ and $6 \leq q \leq 100$. We found all of them to be hyperbolic, with the expected exception of the torus knots $T(p,q)$ corresponding to the standard words $w(p,q)$. This allowed us to conjecture that the results of this work can be extended to all $P(p,q)$ sets.
\item We believe the tecniques used in this paper can be generalized to study other families of words and their corresponding Lorenz braids and knots and thus possibly generate other infinite families of hyperbolic Lorenz knots.
\end{itemize}

\paragraph{Acknowledgements}
\begin{itemize}
\item[] The authors would like to thank Juan Gonz\'alez-Meneses and Hugh Morton for sharing their knowledge and valuable insight during the preparation of this paper.
\item[] Research partially funded by FCT/Portugal through project PEst-OE/\allowbreak{}MAT/\allowbreak{}UI0117/2014. 
\end{itemize}

\end{document}